\newtheorem{thm}[subsection]{Theorem}
\newtheorem{thm/def}[subsection]{Theorem/Definition}
\newtheorem{cor}[subsection]{Corollary}
\newtheorem{lem}[subsection]{Lemma}
\newtheorem{prop}[subsection]{Proposition}
\theoremstyle{definition}
\theoremstyle{definition}
\theoremstyle{definition}
\newtheorem{rem}[subsection]{Remark}
\numberwithin{equation}{subsection}
\newtheorem{pg}[subsection]{}
\newcommand{\Q}{\mathbb{Q}}
\newcommand{\R}{\mathbb R}
\newcommand{\Z}{\mathbb{Z}}
\newcommand{\Sp}{\text{\rm Spec}}
\newcommand{\dR}{\text{dR}}
\newcommand{\mls}{\mathscr}
\newcommand{\et}{\text{\rm\'et}}
\newcommand{\C}{\mathbb{C}}
\begin{document}

\title{A stronger derived Torelli theorem for K3 surfaces}
\author{Max Lieblich and Martin Olsson} 

\begin{abstract} 
  In an earlier paper the notion of a filtered derived
  equivalence was introduced, and it was shown that if two K3 surfaces admit such
  an equivalence then they are isomorphic. In this paper we study more refined
  aspects of filtered derived equivalences related to the action on the
  cohomological realizations of the Mukai motive. It is shown that if a filtered
  derived equivalence between K3 surfaces also preserves ample cones then one can
  find an isomorphism that induces the same map as the equivalence on the
  cohomological realizations.
\end{abstract}

\maketitle
\tableofcontents

\section{Introduction}

\begin{pg}   Let $k$ be an algebraically closed field of odd 
  characteristic and let $X$ and $Y$ be K3 surfaces over $k$.  Let $$
  \Phi :D(X)\rightarrow D(Y) $$ be an equivalence between their bounded
  triangulated categories of coherent sheaves given by a Fourier-Mukai kernel
  $P\in D(X\times Y)$, so $\Phi $ is the functor given by sending $M\in D(X)$
  to $$ R\text{pr}_{2*}(L\text{pr}_1^*M\otimes ^{\mathbb{L}}P).  $$ As
  discussed in \cite[2.9]{LO} the kernel $P$ also induces an isomorphism on
  rational Chow groups modulo numerical equivalence $$ \Phi
  _P^{A^*}:A^*(X)_{\text{num}, \Q}\rightarrow A^*(Y)_{\text{num}, \Q}.  $$

  We can consider how a given equivalence $\Phi$ interacts with the codimension
  filtration on $A^\ast$, or how it acts on the ample cone of $X$ inside $A^1(X)$.
  The underlying philosophy of this work is that tracking filtrations and ample
  cones (in ways we will make precise in Section \ref{S:stronglyfiltered}) gives a
  semi-linear algebraic gadget that behaves a lot like a Hodge structure. In
  Section \ref{S:stronglyfiltered} we will define a notion of \emph{strongly
    filtered} for an equivalence $\Phi$ that imposes conditions reminiscent of the
  classical Torelli theorem for K3 surfaces.

  With this in mind, the purpose of this paper is to prove the following result.  
\end{pg}

\begin{thm}\label{T:1.2} If $\Phi _P:D(X)\rightarrow D(Y)$ is a strongly
  filtered equivalence then there exists an isomorphism $\sigma :X\rightarrow
  Y$ such that the maps on the crystalline and \'etale realizations of the
  Mukai motive  induced by $\Phi _P$ and $\sigma $ agree.  
\end{thm}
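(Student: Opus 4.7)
The plan is to reduce the problem to showing that any strongly filtered autoequivalence of $D(X)$ acts on the Mukai motive through an actual automorphism of $X$. By the main result of \cite{LO}, the (merely) filtered part of the hypothesis already supplies an isomorphism $\sigma_0\colon X\To Y$. Composing $\Phi_P$ with a Fourier-Mukai kernel representing $\sigma_0^{-1}$, I obtain a strongly filtered autoequivalence $\Psi$ of $D(X)$. It then suffices to produce an automorphism $\tau$ of $X$ whose action on the crystalline and $\ell$-adic \'etale Mukai realizations agrees with that of $\Psi$; the composition $\sigma_0\circ\tau$ will then be the desired isomorphism.

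Next I would unpack the strongly filtered condition at the cohomological level. The filtered half forces the cohomological action $\Psi^{\cH}$ on the Mukai motive $\widetilde{\cH}^*(X) = \cH^0\oplus \cH^2\oplus \cH^4$ to be block-diagonal with respect to the degree grading, since cycle class maps intertwine the Chow-level and cohomological Fourier-Mukai actions and $\Psi$ preserves the codimension filtration on $A^*(X)_{\text{num},\Q}$. On the rank-one pieces $\cH^0$ and $\cH^4$ the induced map is $\pm 1$; positivity constraints intrinsic to any Fourier-Mukai kernel (for instance, by reading off the Mukai vector of $\Psi(\mathcal{O}_X)$) together with compatibility with the Mukai pairing force the sign to be $+1$ on both. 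This reduces the problem to the action on $\cH^2$, which by the \emph{strongly} filtered hypothesis is a lattice isometry preserving the ample cone.

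The heart of the argument is then a Torelli-type realization: showing that any ample-cone-preserving isometry of $\cH^2$ arising this way comes from an automorphism of $X$. In odd positive characteristic this naturally splits into two regimes. For supersingular K3 surfaces I would invoke Ogus's crystalline Torelli theorem directly. For K3 surfaces of finite height, one passes to a suitable lift (via canonical lifting in the ordinary case, or via smoothness of the moduli of quasi-polarized K3 surfaces in the general finite-height case), applies the classical complex Torelli theorem in characteristic zero, and then specializes the resulting automorphism back down using properness of the automorphism scheme. Once $\tau$ is produced, the block-diagonal analysis automatically extends the match from $\cH^2$ to the full Mukai motive in both the crystalline and \'etale realizations.

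The main obstacle will be carrying out this Torelli-style realization uniformly across all heights while maintaining compatibility with both cohomology theories at once. Two subtleties in particular will require care: first, verifying that the isometry of $\cH^2$ coming from $\Psi^{\cH}$ is defined integrally and respects the Hodge, conjugate, and Newton filtrations on $\cH^2_{\cri}(X/W)$, so that the Torelli input actually lands in $\text{Aut}(X)$ rather than in a larger orthogonal group of the K3 lattice; and second, threading the lifting-and-specialization argument through a single derived equivalence so that the crystalline and $\ell$-adic actions produced by $\tau$ match those of $\Psi$ simultaneously rather than one theory at a time.
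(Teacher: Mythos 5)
Your opening reduction (compose with a kernel for $\sigma_0^{-1}$ obtained from \cite[6.1]{LO} and treat a strongly filtered autoequivalence of $D(X)$) is logically legitimate, though it forfeits one of the paper's stated goals, namely reproving that theorem without the Hodge-theoretic lifting arguments of \cite{LO}; and it does not actually shrink the problem, since the case $X=Y$ already contains the full difficulty. Your supersingular endgame is the same as the paper's: Ogus's crystalline Torelli theorem, plus the Tate conjecture to pass between $NS$ and $H^2_{\text{crys}}$, with the sign/chamber issue handled by the analogue of Proposition \ref{P:2.10}.

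The genuine gap is the finite-height case. The step ``pass to a suitable lift, apply the classical complex Torelli theorem, and specialize back'' assumes that the autoequivalence $\Psi$ (or at least its cohomological action, as a Hodge isometry of the chosen lift) lifts to characteristic $0$ along with $X$. This is false for an arbitrary lift: the obstruction to deforming the Fourier--Mukai kernel along a square-zero extension is a class in $\text{Ext}^2(E,E)$ computed by the formula $\eta_{Y*}(\omega(E_n))=\Phi_{E_n}^{HH^*}(\nu_{X_n/X_{n+1}})-\nu_{Y_n/Y_{n+1}}$, i.e.\ it vanishes only when the Kodaira--Spencer classes of the two lifts correspond under the Hochschild action of the kernel. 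Choosing a lift for which this holds, and doing so compatibly with the N\'eron--Severi classes needed for Torelli, is precisely the hard part, and your proposal does not supply it. The paper sidesteps characteristic-$0$ lifting at finite-height points entirely: it constructs the stack $\mls S_d$ of triples $((X,\lambda),Y,P)$, proves $\overline{\mls S}_d\to\mls M_d$ is \'etale (Theorem \ref{T:4.8}), specializes \emph{within} characteristic $p$ to a supersingular point of $\mls M_d$ (possible since the supersingular locus is positive-dimensional and the fibers of $\mls M_d$ are irreducible), applies Ogus there, and then transports the isomorphism back using the deformation theory of Section \ref{S:section4b} (Corollary \ref{P:4.15}), Grothendieck existence, and Matsusaka--Mumford. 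On that last point, note also that ``properness of the automorphism scheme'' is not available --- automorphisms of K3 surfaces do not specialize in general --- and what actually makes the specialization work is Matsusaka--Mumford applied to \emph{polarized} isomorphisms, which is where the strongly filtered hypothesis (matching ample classes across the family) is used.
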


For the definition of the realizations of the Mukai motive see
\cite[\S 2]{LO}. In 
\cite[Proof of 6.2]{LO} it is shown that any filtered equivalence can be modified to be
strongly filtered. As a consequence, we get a new proof of the following
result. 

\begin{thm}[{\cite[6.1]{LO}}]\label{T:1.1} If $\Phi _P^{A^*}$ preserves the
  codimension filtrations on $A^*(X)_{\text{\rm num}, \Q}$ and
  $A^*(Y)_{\text{\rm num}, \Q}$ then $X$ and $Y$ are isomorphic.  
\end{thm}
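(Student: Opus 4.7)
The plan is to deduce Theorem \ref{T:1.1} directly from Theorem \ref{T:1.2} together with a modification procedure borrowed from the proof of \cite[6.2]{LO}. Since "strongly filtered" is more restrictive than "filtered," the idea is to first upgrade an arbitrary filtered kernel to a strongly filtered one without changing $X$ or $Y$, and then invoke Theorem \ref{T:1.2} to produce the isomorphism.

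Concretely, given a Fourier--Mukai kernel $P$ for which $\Phi_P^{A^*}$ preserves the codimension filtration on $A^*(X)_{\text{num},\Q}$ and $A^*(Y)_{\text{num},\Q}$, I would invoke the argument from \cite[Proof of 6.2]{LO} to compose $\Phi_P$ with standard autoequivalences of $D(X)$ or $D(Y)$ — shifts, twists by line bundles, and spherical twists associated to $(-2)$-classes — in order to obtain a new kernel $P'$ whose induced equivalence $\Phi_{P'}$ is strongly filtered in the sense of Section \ref{S:stronglyfiltered}. The key geometric input is that the ample cone of a K3 surface is a fundamental chamber for the Weyl group generated by reflections in $(-2)$-classes on $A^1$, so that after composing with an appropriate sequence of spherical twists one can force the image of the ample cone of $X$ to land in the ample cone of $Y$ while preserving the filtration condition already in hand. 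Once $P'$ is constructed, Theorem \ref{T:1.2} applied to $\Phi_{P'}$ furnishes an isomorphism $\sigma: X \to Y$ whose cohomological action agrees with that of $\Phi_{P'}$; in particular $X \cong Y$, which is the conclusion of Theorem \ref{T:1.1}.

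The main obstacle in this plan is verifying that the modification procedure genuinely lands inside the strongly filtered locus rather than in some slightly weaker class; this requires confirming that the combinatorics of Weyl chambers on $A^1$ is faithfully mirrored by the available autoequivalences of $D(X)$, and that the higher filtration data (i.e., the components in $A^0$ and $A^2$) behaves compatibly under these autoequivalences. Apart from that verification, everything is either a direct quotation from \cite{LO} or a black-box application of Theorem \ref{T:1.2}, so the real substance of the proof has been shifted into Theorem \ref{T:1.2} itself.
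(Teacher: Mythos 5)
Your proposal follows essentially the same route as the paper: Theorem \ref{T:1.1} is deduced from Theorem \ref{T:1.2} by first invoking \cite[Proof of 6.2]{LO}, which shows that any filtered equivalence can be upgraded to a strongly filtered one by composing with shifts, twists by line bundles, and spherical twists along $(-2)$-curves. The chamber-combinatorics verification you flag as the main obstacle is exactly the content of that cited result, which the paper (like you) uses as a black box.
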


Whereas the original proof of Theorem \ref{T:1.1} relied heavily on liftings to
characteristic $0$ and Hodge theory, the proof presented here works primarily in
positive characteristic using  algebraic methods.

In Section \ref{S:section8} we present a proof of Theorem \ref{T:1.2} using certain
results about ``Kulikov models'' in positive characteristic (see Section
\ref{S:section4}). This argument implicitly uses Hodge theory which is an
ingredient in the proof of  Theorem \ref{T:2.2v1}. In Section \ref{S:section9} we discuss
a characteristic $0$ variant of Theorem \ref{T:1.2}, and finally the last
section \ref{S:section10} we explain how to bypass the use of the Hodge theory
ingredient of Theorem \ref{T:2.2v1}. This makes the argument entirely algebraic, except
for the Hodge theory aspects of the proof of the Tate conjecture. This also
gives a different algebraic perspective on the statement that any Fourier-Mukai
partner of a K3 surface is a moduli space of sheaves, essentially inverting
the methods of \cite{LO}. 

The bulk of this paper is devoted to proving Theorem \ref{T:1.2}. The basic idea is to
consider a certain moduli stack $\mls S_d$ classifying data $((X, \lambda ), Y,
P)$ consisting of a primitively polarized K3 surface $(X, \lambda )$ with
polarization of some degree $d$, a second K3 surface $Y$, and a complex $P\in
D(X\times Y)$ defining a strongly filtered Fourier-Mukai equivalence $\Phi
_P:D(X)\rightarrow D(Y)$. The precise definition is given in Section \ref{S:3},
where it is shown that $\mls S_d$ is an algebraic stack which is naturally a
$\mathbb{G}_m$-gerbe over a Deligne-Mumford stack $\overline {\mls S}_d$ \'etale
over the stack $\mls M_d$ classifying primitively polarized K3 surfaces of
degree $d$. The map $\overline {\mls S}_d\rightarrow \mls M_d$ is induced by the
map sending a collection $((X, \lambda ), Y, P)$ to $(X, \lambda )$. We then
study the locus of points in $\mls S_d$ where Theorem \ref{T:1.2} holds showing that it
is stable under both generization and specialization. From this it follows that
it suffices to consider the case when $X$ and $Y$ are supersingular where we can
use Ogus' crystalline Torelli theorem \cite[Theorem I]{Ogus2}.

\begin{rem} Our restriction to odd characteristic is because we appeal to the
  Tate conjecture for K3 surfaces, proven in odd characteristics by Charles,
  Maulik, and Pera \cite{Ch, Maulik, Pera}, which at present is not known in
  characteristic $2$.  
\end{rem}

\begin{pg} (Acknowledgements) Lieblich partially supported by NSF CAREER Grant
  DMS-1056129 and Olsson partially supported by NSF grant DMS-1303173 and a grant
  from The Simons Foundation. Olsson is grateful to F. Charles for inspiring
  conversations at the Simons Symposium ``Geometry Over Nonclosed Fields'' which
  led to results of this paper. We also thank E. Macr\`\i\, D. Maulik, and K. Pera for
  useful correspondence.
\end{pg}

\section{Strongly filtered equivalences}\label{S:stronglyfiltered}

\begin{pg} Let $X$ and $Y$ be K3 surfaces over an algebraically closed  field
  $k$ and let $P\in D(X\times Y)$ be an object defining an equivalence $$ \Phi
  _P:D(X)\rightarrow D(Y),  $$ and let $$ \Phi ^{A^*_{\text{num},
      \Q}}_P:A^*(X)_{\text{num}, \Q}\rightarrow A^*(Y)_{\text{num}, \Q} $$ denote the
  induced map on Chow groups modulo numerical equivalence and tensored with $\Q$.
  We say that $\Phi _P$ is \emph{filtered} (resp.\ \emph{strongly filtered},
  resp.\ \emph{Torelli}) if $\Phi ^{A^*_{\text{num}, \Q}}_P$ preserves the
  codimension filtration (resp.~is filtered, sends $(1, 0, 0)$ to $(1, 0, 0)$,
  and sends the ample cone of $X$ to plus or minus the ample cone of $Y$;
  resp.~is filtered, sends $(1, 0, 0)$ to $\pm (1, 0, 0)$, and sends the ample
  cone of $X$ to the ample cone of $Y$).  \end{pg} 

\begin{rem}
  Note that if $P$ is strongly filtered then either $P$ or $P[1]$ is Torelli. If
  $P$ is Torelli then either $P$ or $P[1]$ is strongly filtered.
\end{rem}
\begin{rem} Note that $A^1(X)$ is the orthogonal complement of
  $A^0(X)\oplus A^2(X)$ and similarly for $Y$.  This implies that if $\Phi _P$
  is filtered and sends $(1, 0, 0)$ to $\pm (1, 0, 0)$ then  $\Phi
  _P(A^1(X)_{\text{num}, \Q})\subset A^1(Y)_{\text{num}, \Q}$.  
\end{rem}

\begin{rem} It is shown in \cite[6.2]{LO} that if $\Phi _P:D(X)\rightarrow
  D(Y)$ is a filtered equivalence, then there exists a strongly filtered
  equivalence $\Phi :D(X)\rightarrow D(Y)$.  In fact it is shown there that
  $\Phi $ can be obtained from $\Phi _P$ by composing with a sequence of
  shifts, twists by line bundles, and spherical twists along $(-2)$-curves.
\end{rem}

\begin{pg} As noted in \cite[2.11]{LO} an equivalence $\Phi _P$ is filtered if
  and only if the induced map on Chow groups $$ \Phi ^{A^*_{\text{num},
      \Q}}_P:A^*(X)_{\text{num}, \Q}\rightarrow A^*(Y)_{\text{num}, \Q} $$ sends
  $A^2(X)_{\text{num}, \Q}$ to $A^2(X)_{\text{num}, \Q}$.  
\end{pg}

\begin{lem}\label{L:2.5b} Let $\ell $ be a prime invertible in $k$, let
  $\widetilde H(X, \Q_\ell )$ (resp.~$\widetilde H(Y, \Q_\ell )$) denote the
  $\Q_\ell $-realization of the Mukai motive of $X$ (resp.~$Y$) as defined in
  \cite[2.4]{LO}, and let $$ \Phi _P^{\et }:\widetilde H(X, \Q_\ell
  )\rightarrow \widetilde H(Y, \Q_\ell ) $$ denote the isomorphism defined by
  $P$.  Then $\Phi _P$ is filtered if and only if $\Phi _P^\et $ preserves the
  filtrations by degree on $\widetilde H(X, \Q_\ell )$ and $\widetilde H(Y,
  \Q_\ell )$.  
\end{lem}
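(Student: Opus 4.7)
The plan is to exploit two standard facts, translating back and forth between Chow groups and $\ell$-adic cohomology: (i) the compatibility of the Chow and \'etale actions of the kernel $P$ via the cycle class map, and (ii) the self-duality of the degree filtration on $\widetilde H$ under the Mukai pairing. By the remark preceding the lemma, $\Phi _P$ is filtered if and only if $\Phi _P^{A^*_{\text{num}, \Q}}$ sends $A^2(X)_{\text{num}, \Q}$ into $A^2(Y)_{\text{num}, \Q}$, so the task reduces to translating this single condition on $A^2$ into a statement about the degree filtration on $\widetilde H(-, \Q_\ell)$.

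The first step is to observe that for a K3 surface the cycle class map induces a $\Q_\ell$-linear isomorphism $A^2(X)_{\text{num}, \Q}\otimes \Q_\ell \cong H^4(X, \Q_\ell )(2)$, both one-dimensional, with the class of a point generating the right-hand side; and similarly in degree $0$. Combined with the standard compatibility $\text{cl}\circ \Phi _P^{A^*} = \Phi _P^{\et }\circ \text{cl}$ (which follows directly from the fact that $\Phi _P^{A^*}$ is defined by the action of the cycle class of $P$, and is already implicit in the setup of \cite{LO}), this gives: $\Phi _P^{A^*}$ carries $A^2(X)_{\text{num}, \Q}$ into $A^2(Y)_{\text{num}, \Q}$ if and only if $\Phi _P^{\et }$ carries $H^4(X, \Q_\ell )(2)$ into $H^4(Y, \Q_\ell )(2)$, i.e.~preserves the deepest piece $F^2$ of the degree filtration.

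The second step is to upgrade this to preservation of the full filtration $0\subset F^2\subset F^1\subset \widetilde H$, where $F^1 = H^2(-,\Q_\ell)(1)\oplus H^4(-,\Q_\ell)(2)$. A direct computation with the Mukai pairing, under which $H^0$ pairs nontrivially only with $H^4$ and $H^2$ pairs with itself, shows that $F^1$ is precisely the orthogonal complement of $F^2$. Since $\Phi _P^{\et }$ is an isometry for the Mukai pairing, it preserves $F^2$ if and only if it preserves $(F^2)^\perp = F^1$, and combining the two gives preservation of the entire filtration. I do not anticipate any real obstacle; the main points to verify carefully are the cycle class compatibility of Fourier-Mukai functors and the bookkeeping of Tate twists in the Mukai pairing calculation, both of which are routine.
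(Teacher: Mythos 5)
Your argument is correct and is essentially the paper's own proof: both reduce "filtered" to preservation of the top graded piece ($A^2$ on the Chow side, $H^4$ on the étale side) via the orthogonality of the degree filtration under the Mukai pairing, and then transfer between the two sides using the cycle class compatibility, the isomorphism $A^2_{\text{num},\Q}\otimes\Q_\ell\simeq H^4$, and the injectivity of the cycle class maps. You simply make explicit the orthogonality step that the paper delegates to ``the same reasoning as in [LO, 2.4].''
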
 

\begin{proof} By the same reasoning as in
  \cite[2.4]{LO} the map $\Phi _P^{\et }$ is filtered if and only if $$\Phi
  _P^\et (H^4(X, \Q_\ell )) = H^4(Y, \Q_\ell ).$$  Since the cycle class maps $$
  A^2(X)_{\text{num}, \Q}\otimes _{\Q}\Q_\ell \rightarrow H^4(X, \Q_\ell ), \ \
  A^2(Y)_{\text{num}, \Q}\otimes _{\Q}\Q_\ell \rightarrow H^4(Y, \Q_\ell ) $$
  are isomorphisms and the maps $\Phi _P$ and $\Phi _P^\et $ are compatible in
  the sense of \cite[2.10]{LO} it follows that if $\Phi _P$ is filtered then so
  is $\Phi _P^\et $.  Conversely if $\Phi _P^\et $ is filtered then since the
  cycle class maps $$ A^*(X)_{\text{num}, \Q}\rightarrow \widetilde H(X,
  \Q_\ell ), \ \ A^*(Y)_{\text{num}, \Q}\rightarrow \widetilde H(Y, \Q_\ell )
  $$ are injective it follows that $\Phi _P$ is also filtered.  
\end{proof}

\begin{rem} The same proof as in Lemma \ref{L:2.5b} gives variant results for
  crystalline cohomology and in characteristic $0$ de Rham cohomology.
\end{rem}

The condition that $\Phi _P$ takes the ample cone to plus or minus the ample
cone appears more subtle.  A useful observation in this regard is the
following. 

\begin{lem}\label{L:2.7}  Let $P\in D(X\times Y)$ be an object
  defining a filtered equivalence $\Phi _P:D(X)\rightarrow D(Y)$ such that
  $\Phi _P^{A^*_{\text{\rm num}}}$ sends $(1, 0, 0)$ to $(1, 0, 0)$.  Then
  $\Phi _P$ is strongly filtered if and only if for some ample invertible sheaf
  $L$ on $X$ the class $\Phi _P^{A^*_{\text{\rm num}}}(L)\in NS(Y)_\Q$ is plus
  or minus an ample class.  
\end{lem}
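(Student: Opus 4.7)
\smallskip

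\noindent\textbf{Proof plan.} One direction is built into the definition: if $\Phi_P$ is strongly filtered then $\Phi_P^{A^\ast_{\text{num}}}$ carries the whole ample cone of $X$ to $\pm$ the ample cone of $Y$, so in particular every ample line bundle has this property. The real content is the converse. The plan is to extract from $\Phi_P^{A^\ast_{\text{num}}}$ an isometry on Néron--Severi groups, invoke the standard chamber description of the K\"ahler cone for a K3 surface, and then use a connectedness argument to propagate the hypothesis at a single ample $L$ to the entire ample cone.

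First I would restrict $\Phi_P^{A^\ast_{\text{num}}}$ to middle degree. Since $\Phi_P$ is filtered and sends $(1,0,0)$ to $(1,0,0)$, Remark 2.3 of the paper gives a well-defined $\Q$-linear map
\[
\phi\colon NS(X)_\Q=A^1(X)_{\text{num},\Q}\longrightarrow A^1(Y)_{\text{num},\Q}=NS(Y)_\Q.
\]
Because $\Phi_P$ arises from a Fourier--Mukai kernel, the induced map on the Mukai motive is an isometry for the Mukai pairing (by the compatibility recalled in \cite[2.10]{LO}); and on the $A^1$ summand the Mukai pairing is, up to sign, just the intersection pairing. Together with $(1,0,0)\mapsto(1,0,0)$ this forces $\phi$ to be an isometry of $(NS(\cdot)_\Q,\,\cdot\,)$. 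Extending scalars to $\R$, I get an isometry $\phi_\R\colon NS(X)_\R\to NS(Y)_\R$.

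Next, an isometry takes the quadric $\{x^2>0\}$ to itself, hence carries the positive cone $\mathcal C_X$ bijectively onto $\mathcal C_Y$; it also permutes the set of $(-2)$-classes. By hypothesis $\phi(L)=\pm M$ for some ample $L$ on $X$ and some ample $M$ on $Y$, and ample classes lie in the distinguished components $\mathcal C_X^+,\mathcal C_Y^+$, so $\phi_\R(\mathcal C_X^+)=\varepsilon\,\mathcal C_Y^+$ with $\varepsilon\in\{\pm1\}$ determined by the sign in the hypothesis.

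The main step is then to invoke the standard fact (see, e.g., Huybrechts, \emph{Lectures on K3 Surfaces}) that the ample cone of a K3 surface is a connected component of
\[
\mathcal C^+\ \setminus\ \bigcup_{\delta\in NS,\ \delta^2=-2}\delta^\perp.
\]
Because $\phi_\R$ is an isometry preserving the set of $(-2)$-classes, it sends the chamber decomposition of $\mathcal C_X^+$ to that of $\varepsilon\,\mathcal C_Y^+$; in particular $\phi_\R(\mathrm{Amp}(X))$ is some chamber of $\varepsilon\,\mathcal C_Y^+$. This chamber contains $\phi(L)=\pm M$, which is $\pm$-ample, so it must coincide with $\pm\mathrm{Amp}(Y)$. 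Thus $\phi(\mathrm{Amp}(X))=\pm\mathrm{Amp}(Y)$, and $\Phi_P$ is strongly filtered. The only nontrivial input is the chamber description of $\mathrm{Amp}$; everything else is formal bookkeeping of signs and of the fact that Fourier--Mukai equivalences preserve the Mukai pairing.
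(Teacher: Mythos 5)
Your proposal is correct and follows essentially the same route as the paper: the paper also reduces to the induced isometry on $NS(\cdot)_\R$, invokes \cite[Proposition 1.10 and Remark 1.10.9]{Ogus2} to identify the ample cone as a connected component of the locus $V=\{x^2>0,\ \langle x,\delta\rangle\neq 0 \text{ for all } \delta^2=-2\}$ (your chamber decomposition), and concludes that the image of $C_X$ either equals $\pm C_Y$ or is disjoint from it, so the single class $\Phi_P(L)$ decides the matter.
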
 

\begin{proof} Following \cite[p.
  366]{Ogus2} define $$ V_X:= \{x\in NS(X)_\R|x^2>0, \ \text{and $\langle x,
    \delta \rangle  \neq 0$ for all $\delta \in NS(X)$ with $\delta ^2 = -2$}\},
  $$ and define $V_Y$ similarly.  Since $\Phi _P^{A^*_{\text{\rm num}}}$ is an
  isometry it induces an isomorphism $$ \sigma :V_X\rightarrow V_Y.  $$ By
  \cite[Proposition 1.10 and Remark 1.10.9]{Ogus2} the ample cone $C_X$ (resp.
  $C_Y$) of $X$ (resp. $Y$) is a connected component of $V_X$ (resp. $V_Y$) and
  therefore either $\sigma (C_X)\cap C_Y = \emptyset $ or $\sigma (C_X) = C_Y$,
  and similarly $\sigma (-C_X) \cap C_Y = \emptyset$ or $\sigma (-C_X) = C_Y$.
\end{proof}

\begin{prop}\label{P:2.8}  Let $X$ and $Y$ be K3-surfaces over a scheme $S$
  and let $P\in D(X\times _SY)$ be a relatively perfect complex.  Assume that
  $X/S$ is projective.  Then the set of points $s\in S$ for which the induced
  transformation on the derived category of the geometric fibers $$ \Phi
  _{P_{\bar s}}:D(X_{\bar s})\rightarrow D(Y_{\bar s}) $$ is a strongly
  filtered equivalence is an open subset of $S$.  
\end{prop}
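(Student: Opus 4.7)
The plan is to decompose the condition of being a strongly filtered equivalence into its constituent pieces --- being an equivalence, being filtered, sending $(1,0,0)$ to $(1,0,0)$, and sending the ample cone to $\pm$ the ample cone --- and to verify that each cuts out an open subset of $S$. The first three are locally constant in nature, while the fourth is handled via Lemma \ref{L:2.7} and the classical openness of ampleness in proper families.

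The locus where $\Phi_{P_{\bar s}}$ is an equivalence is open by a standard argument: the candidate inverse kernel (a twist of $P^{\vee}$) gives rise to compositions on $X\times_S X$ and $Y\times_S Y$, and the equivalence condition amounts to isomorphisms of perfect complexes there, which propagates from a point to a neighborhood. Next, choose a prime $\ell$ invertible on $S$; by Lemma \ref{L:2.5b} applied fiberwise, the filtered condition at $\bar s$ is equivalent to $\Phi_{P_{\bar s}}^{\et}$ preserving the degree filtration on the $\ell$-adic Mukai realization. Because $P$ is relatively perfect, $\Phi_P^{\et}$ is a morphism of lisse $\Q_\ell$-sheaves on $S$, and the preservation by such a morphism of a given sublisse sheaf is a locally constant condition. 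Similarly, $\Phi_P^{A^*}(1,0,0) = (1,0,0)$ compares two global sections of a lisse $\Q_\ell$-sheaf and is locally constant. Let $S' \subseteq S$ denote the resulting open subset on which all three conditions hold.

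Over $S'$, choose an $S$-ample line bundle $L$ on $X$, available by projectivity of $X/S$. Form the relatively perfect complex $\Phi_P(L) := Rp_{Y,*}(Lp_X^* L \otimes^{\mathbb{L}} P)$ on $Y/S$, and set $\mathcal{M} := \det \Phi_P(L)$. The filteredness and the fixing of $(1,0,0)$ on fibers over $S'$ force the Mukai vector of $\Phi_{P_{\bar s}}(L_{\bar s})$ to have the form $(1, \phi_{\bar s}(L), \ast)$, so that $c_1(\mathcal{M}|_{Y_{\bar s}}) = \phi_{\bar s}(L)$ in $NS(Y_{\bar s})_\Q$. By Lemma \ref{L:2.7}, the strongly filtered condition on the fiber at $\bar s \in S'$ is equivalent to $\mathcal{M}|_{Y_{\bar s}}$ being either ample or anti-ample. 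Since $Y/S$ is proper, each of these is an open condition on $S$ by a standard result (EGA IV$_3$, 9.6.4), and the union of the two loci intersected with $S'$ is the desired open subset of $S$.

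The main obstacle is conceptual rather than computational: preservation of the ample cone is a priori a condition on infinitely many cohomology classes with no obvious openness. Lemma \ref{L:2.7} is precisely the device that reduces it to the ampleness of a single line bundle, which is where the classical openness in proper families can be applied.
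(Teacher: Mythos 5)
Your proposal is correct and follows essentially the same route as the paper: the same decomposition into equivalence/filtered/ample-cone conditions, the same use of Lemma \ref{L:2.5b} via lisse $\ell$-adic Mukai realizations for the locally constant part, and the same reduction of the ample-cone condition to openness of ampleness for $\mathcal{M}=\det\Phi_P(L)$ via Lemma \ref{L:2.7}. The only (harmless) differences are that you spell out the openness of the condition $(1,0,0)\mapsto(1,0,0)$, which the paper leaves implicit, while the paper is more explicit than you are about the equivalence locus (it takes the cone of a comparison map of relatively perfect complexes and removes the closed image of its support).
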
 

\begin{proof} By a
  standard reduction we may assume that $S$ is of finite type over $\Z$.

  First note that the condition that $\Phi _{P_{\bar s}}$ is an equivalence is an
  open condition.  Indeed as described  in \cite[discussion preceding 3.3]{LO}
  there exists a morphism of $S$-perfect complexes $\epsilon :P_1\rightarrow P_2$
  in $D(X\times _SY)$ such that $\Phi _{P_{\bar s}}$ is an equivalence if and
  only if $\epsilon _{\bar s}:P_{1, \bar s}\rightarrow P_{2, \bar s}$ is an
  isomorphism in $D(X_{\bar s}\times Y_{\bar s})$ (in loc. cit. we considered two
  maps of perfect complexes but one can just take the direct sum of these to get
  $\epsilon $).  Let $Q$ be the cone of $\epsilon $, and let $Z\subset X\times
  _SY$ be the support of the cohomology sheaves of $Q$.  Then the image of $Q$ in
  $S$ is closed and the complement of this image is the maximal open set over
  which the fiber transformations $\Phi _{P_{\bar s}}$ are equivalences.

  Replacing $S$ by an open set we may therefore assume that $\Phi _{P_{\bar s}}$
  is an equivalence in every fiber.

  Next we show that the condition that $\Phi _P$ is filtered is an open and
  closed condition.   For this we may assume we have a prime $\ell $ invertible
  in $S$.  Let $f_X:X\rightarrow S$ (resp. $f_Y:Y\rightarrow S$) be the structure
  morphism.  Define $\widetilde {\mls H}_{X/S}$ to be the lisse $\Q_\ell $-sheaf
  on $S$ given by $$ \widetilde {\mls H}_{X/S}:= (R^0f_{X*}\Q_\ell (-1))\oplus
  (R^2f_{X*}\Q_\ell )\oplus (R^4f_{X*}\Q_\ell )(1), $$ and define $\widetilde
  {\mls H}_{Y/S}$ similarly.  The kernel $P$ then induces a morphism of lisse
  sheaves $$ \Phi _{P/S}^{\et, \ell }:\widetilde {\mls H}_{X/S}\rightarrow
  \widetilde {\mls H}_{Y/S} $$ whose restriction to each geometric fiber is the
  map on the $\Q_\ell $-realization of the Mukai motive as in \cite[2.4]{LO}.  In
  particular, $\Phi _{P/S}^{\et, \ell }$ is an isomorphism.  By Lemma \ref{L:2.5b} for
  every geometric point $\bar s\rightarrow S$ the map $\Phi _{P_{\bar s}}$ is
  filtered if and only if the stalk $\Phi _{P/S, \bar s}^{\et, \ell }$ preserves
  the filtrations on $\widetilde {\mls H}_{X/S}$ and $\widetilde {\mls H}_{Y/S}$.
  In particular this is an open and closed condition on $S$. Shrinking on $S$ if
  necessary we may therefore further assume that $\Phi _{P_{\bar s}}$ is filtered
  for every geometric point $\bar s\rightarrow S$.

  It remains to show that in this case the set of points $s$ for which $\Phi _P$
  takes the ample cone $C_{X_{\bar s}}$ of $X_{\bar s}$ to $\pm C_{Y_{\bar s}}$
  is an open subset of $S$.  For this we can choose, by our assumption that $X/S$
  is projective, a relatively ample invertible sheaf $L$ on $X$.  Define $$ M:=
  \text{det}(R\text{pr}_{2*}(L\text{pr}_1^*(L)\otimes P)), $$ an invertible sheaf
  on $Y$.  Then by Lemma \ref{L:2.7} for a point $s\in S$ the transformation $\Phi
  _{P_{\bar s}}$ is strongly filtered if and only if the restriction of $M$ to
  the fiber $Y_{\bar s}$ is plus or minus the class of an ample divisor. By
  openness of the ample locus \cite[III, 4.7.1]{EGA} we get that being strongly
  filtered is an open condition.  
\end{proof}

\begin{prop}\label{P:2.10} 
Let $P\in D(X\times Y)$ be a complex such that the induced transformation 
$$
\Phi _P^{A^*_{\text{\rm num}, \Q}}:A^*(X)_{\text{\rm num}, \Q}\rightarrow A^*(X)_{\text{\rm num}, \Q}
$$
preserves the codimension filtration, takes $(1, 0, 0)$ to $(1, 0, 0)$, and takes the ample cone of $X$ to plus or minus the ample cone of $Y$ (so $P$ does not necessarily define an equivalence but otherwise behaves like a strongly filtered Fourier-Mukai equivalence).
  Suppose there exists an equivalence $\Phi _Q:D(X)\rightarrow D(Y)$ which is
  Torelli, and such that the induced map $NS(X)\rightarrow NS(Y)$ agrees with the
  map defined by $\pm \Phi _P$. Then $\Phi _P^{A^*_{\text{\rm num}, \Q}}$
  preserves the ample cones.
\end{prop}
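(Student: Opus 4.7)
The plan is to use Lemma \ref{L:2.7} to reduce everything to checking the image of a single ample class. Fix an ample line bundle $L$ on $X$. The hypothesis on $NS$ gives a sign $\varepsilon \in \{\pm 1\}$ with $\Phi_Q|_{NS(X)_\Q} = \varepsilon \cdot \Phi_P|_{NS(X)_\Q}$, so that $\Phi_P(L) = \varepsilon \Phi_Q(L)$ in $NS(Y)_\Q$. Since $\Phi_Q$ is Torelli, $\Phi_Q(L)$ lies in the ample cone $C_Y$, and hence $\Phi_P(L) \in \varepsilon C_Y$.

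Next, the hypotheses on $\Phi_P^{A^*_{\text{num},\Q}}$ --- that it is a filtered isometry sending $(1,0,0)$ to $(1,0,0)$ and taking $C_X$ into $\pm C_Y$ --- put us in exactly the setting of Lemma \ref{L:2.7}: the behaviour of $\Phi_P$ on the whole ample cone is determined by its value on a single class, since the two connected components of $V_Y$ are $C_Y$ and $-C_Y$. In particular $\Phi_P(C_X)=C_Y$ if and only if $\Phi_P(L)\in C_Y$, so if $\varepsilon=+1$ the conclusion is immediate.

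The main content is therefore to rule out $\varepsilon=-1$. A naive attempt --- replacing $Q$ by $Q[1]$ to toggle the sign on $NS$ --- fails because the shift breaks the Torelli property (it sends the ample cone to its negative). I therefore expect the argument to exploit additional structure: either the fact that a genuine Fourier--Mukai equivalence $\Phi_Q$ preserves a canonical orientation of the positive $4$-plane in the Mukai lattice, together with the normalisations $\Phi_P(1,0,0)=(1,0,0)$ and $\Phi_Q(1,0,0)=\pm(1,0,0)$, to force the $NS$-sign to be $+$; or alternatively, the construction of a replacement Torelli equivalence $Q'$, obtained from $Q$ by composition with a suitable autoequivalence of $D(X)$ or $D(Y)$ (for instance a spherical twist along a $(-2)$-curve), whose action on $NS$ agrees with $+\Phi_P|_{NS}$, thereby reducing everything to the case $\varepsilon=+1$ handled above.

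The main obstacle is thus the sign determination: while the $+$ case follows almost formally from Lemma \ref{L:2.7} and the Torelli property of $\Phi_Q$, excluding the $-$ case requires knowing that the isometries of the Mukai lattice actually realised by Fourier--Mukai equivalences are sufficiently constrained --- a structural input from orientation-preservation for derived equivalences, and its crystalline counterpart in positive characteristic.
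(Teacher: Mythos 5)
Your reduction via Lemma \ref{L:2.7} is correct but it is essentially a restatement of the proposition rather than progress on it: since $\Phi _Q$ sends $C_X$ to $C_Y$ and $\Phi _Q|_{NS(X)_\Q}=\varepsilon \Phi _P|_{NS(X)_\Q}$, the assertion ``$\Phi _P(C_X)=-C_Y$'' is \emph{equivalent} to ``$\varepsilon =-1$.'' So the entire content of the statement sits in the step you leave open, and ``I expect the argument to exploit either orientation-preservation or a replacement $Q'$'' is a conjecture about a proof, not a proof. Both of the routes you float are moreover problematic. Orientation-preservation of derived equivalences in the sense of \cite{HMS} is essentially the statement this paper is proving in positive characteristic (the remark after the proof of Theorem \ref{T:1.2} makes exactly this point), so taking it as an input here would be circular; and no modification of $Q$ by shifts, line-bundle twists, or spherical twists produces a Torelli equivalence whose $NS$-action is $-\Phi _Q|_{NS}$ while the ample cone is still preserved --- the shift flips the sign on all of $\widetilde H$ at once (which is exactly why $Q[1]$ fails, as you note), and the other operations move $NS$ nontrivially.

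What the paper actually does is close this gap by elementary lattice arithmetic using only the $Q$ you already have in hand. Assume for contradiction that $\Phi _P(C_X)=-C_Y$, so that $\Phi _{P[1]}=-\Phi _P$ agrees with $\Phi _Q$ on $NS$, and consider the composite $\Phi :=\Phi _Q^{-1}\circ \Phi _{P[1]}$ acting on $A^*(X)_{\text{num},\Q}$. This composite preserves the codimension filtration and the Mukai pairing and is the identity on $NS(X)_{\text{num},\Q}$; pairing $(1,0,0)$ and $(0,0,1)$ against each other and against classes $(0,H,0)$ then pins down the action on $A^0\oplus A^2$ and forces $\Phi _{P[1]}(1,0,0)=(1,0,0)$, contradicting $\Phi _{P[1]}(1,0,0)=-\Phi _P(1,0,0)=-(1,0,0)$. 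The idea you are missing is that the hypothesis already supplies a Torelli equivalence whose $NS$-action matches $\Phi _P$ up to sign, and composing with its inverse converts the sign question into a rigidity statement about filtered isometries of the Mukai lattice of a \emph{single} K3 surface --- no external input about which isometries are realized by Fourier--Mukai kernels is needed. To repair your write-up you should carry out this composition argument (or an equivalent one) explicitly; as it stands the case $\varepsilon =-1$ is simply not excluded.
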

\begin{proof}  
  Suppose that $\Phi _P$ takes the ample cone of $X$ to the negative of the ample
  cone of $Y$. Consider the auto-equivalence $\Phi := \Phi _Q^{-1}\circ \Phi
  _{P[1]}$ of $D(X)$. The induced automorphism
  $$
  \Phi ^{A^*_{\text{num}, \Q}}:A^*(X)_{\text{num} , \Q}\rightarrow
  A^*(X)_{\text{num}, \Q}
  $$
  then preserves the codimension filtration, Mukai pairing, and is the identity
  on $NS(X)_{\text{num} , \Q}$ and multiplication by $-1$ on $A^0(X)_{\text{num},
    \Q}$ and $A^2(X)_{\text{num}, \Q}$. By the compatibility of $\Phi $ with the
  Mukai pairing this implies that for any $H\in NS(X)$ we have
  $$
  -H^2 = \Phi \langle (0, H, 0), (0, H, 0)\rangle = \langle (0, H, 0), (0, H,
  0)\rangle = H^2,
  $$
  which is a contradiction. Thus $\Phi _{P[1]}$ must take $(0, 0, 1)$ to $(0, 0, 1)$
  which implies that $\Phi _{P[1]}$ takes $(1, 0,0)$ to $(1, 0, 0)$, a contradiction.
\end{proof}

\section{Moduli spaces of K3 surfaces}\label{S:3}

\begin{pg} For an integer $d$ invertible in $k$ let $\mls M_{d}$ denote the
  stack over $k$ whose fiber over a scheme $T$ is the groupoid of pairs $(X,
  \lambda )$ where $X/T$ is a proper smooth algebraic space all of whose
  geometric fibers are K3 surfaces and $\lambda :T\rightarrow
  \text{Pic}_{X/T}$ is a morphism to the relative Picard functor such that in
  every geometric fiber $\lambda $ is given by a primitive  ample line bundle
  $L_\lambda $ whose self-intersection is $2d$.  The following theorem
  summarizes the properties of the stack $\mls M_d$ that we will need.
\end{pg}

\begin{thm}\label{T:2.2} (i) $\mls M_d$ is a Deligne-Mumford stack, smooth over
  $k$ of relative dimension $19$.

  (ii) If $p\geq 3$ and $p^2\nmid d$ then the geometric fiber of $\mls M_d$ is
  irreducible.

  (iii) The locus $\mls M_{d, \infty }\subset \mls M_d$ classifying supersingular
  K3 surfaces is closed of dimension $\geq 9$.  
\end{thm}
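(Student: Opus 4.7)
The plan is to handle part (i) by standard representability and deformation theory, and to obtain parts (ii) and (iii) by appealing to the known structure theory of moduli of K3 surfaces in positive characteristic.

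For part (i), I would first verify algebraicity of $\mls M_d$ by exhibiting it as a quotient of a suitable locally closed substack of a Hilbert scheme: a fixed multiple of a primitive degree-$2d$ polarization yields a projective embedding with fixed Hilbert polynomial, bounding the moduli problem. Finiteness of the automorphism group of a polarized K3 is classical, and unramifiedness of the stabilizer group scheme follows from $H^0(X, T_X) = 0$ (using $T_X \cong \Omega^1_X$ via triviality of $\omega_X$, whence $H^0(X, T_X) = H^{1,0}(X) = 0$), so $\mls M_d$ is Deligne-Mumford. For smoothness and relative dimension $19$, infinitesimal deformations of $X$ are classified by $H^1(X, T_X)$, which has dimension $h^{1,1}(X) = 20$; imposing the condition that the polarization class deforms cuts this down by one, giving tangent spaces of dimension $19$. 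Obstructions live in $H^2(X, T_X) \cong H^0(X, \Omega^1_X)^\vee = 0$, so polarized deformations are unobstructed.

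For part (ii), I would appeal directly to the geometric irreducibility of the moduli stack of primitively polarized K3 surfaces of degree $2d$ in mixed characteristic $p$ under the hypothesis $p \geq 3$ and $p^2 \nmid d$, as established by Madapusi Pera through his extension of the Kuga-Satake construction to integral models of orthogonal Shimura varieties. The classical irreducibility over $\mathbb{C}$ (from the global Torelli theorem and connectedness of the period domain) transports to characteristic $p$ along the Kuga-Satake period map under precisely these arithmetic hypotheses on the pair $(p, d)$.

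For part (iii), closedness of the supersingular locus follows from upper semicontinuity of the Newton polygon of the $F$-crystal $H^2_{\cri}(X/W)$: supersingularity is equivalent to this Newton polygon being the straight segment of slope $1$, which is maximal under specialization. For the dimension bound, I would invoke Artin's stratification of the supersingular locus by the Artin invariant $\sigma_0 \in \{1, \ldots, 10\}$, where the closed stratum of surfaces with Artin invariant $\leq \sigma_0$ has dimension $\sigma_0 - 1$; taking $\sigma_0 = 10$ then yields a $9$-dimensional sublocus. The main obstacle is part (ii), which genuinely relies on sophisticated global arguments of Madapusi Pera; parts (i) and (iii) are essentially assembled from classical deformation theory of K3 surfaces together with the Artin--Ogus theory of supersingular K3 crystals.
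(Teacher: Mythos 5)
The paper's own ``proof'' of this theorem is purely by citation --- (i) and (iii) to Ogus's survey and (ii) to Liedtke's lecture notes, which in turn record Madapusi Pera's irreducibility theorem --- so your reconstruction follows the same underlying sources in substance. Two points in your sketch need repair. First, in (i) the vanishing $H^0(X,T_X)=H^0(X,\Omega^1_X)=0$ is \emph{not} a consequence of Hodge theory in characteristic $p$ (there is no Hodge symmetry to invoke): it is the Rudakov--Shafarevich theorem that K3 surfaces carry no nonzero global vector fields, equivalently no global $1$-forms, and must be cited as such. Likewise, to see that imposing the polarization cuts the $20$-dimensional tangent space down \emph{transversally} --- so that $\mls M_d$ is smooth of dimension $19$ rather than merely having tangent spaces of dimension $\geq 19$ --- you need the cup-product map $H^1(X,T_X)\to H^2(X,\mls O_X)$, $v\mapsto v\cup c_1(L)$, to be surjective; this holds because $c_1(L)\neq 0$ in $H^1(X,\Omega^1_X)$ when $p\nmid 2d$, which is exactly where the standing hypothesis that $d$ is invertible in $k$ enters (Deligne's argument).

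Second, your dimension argument in (iii) proves a weaker statement than intended and has a gap. Producing the $9$-dimensional Artin-invariant-$10$ family inside $\mls M_{d,\infty}$ requires knowing that those particular supersingular surfaces admit primitive polarizations of the given degree $2d$, which you have not addressed; and even granting that, it only shows that \emph{some} component is $9$-dimensional. The bound Artin and Ogus actually prove is that \emph{every} nonempty component of $\mls M_{d,\infty}$ has dimension $\geq 9$: on the $19$-dimensional deformation space the strata where the formal Brauer group has height $\geq h$ are successively cut out by one additional equation each, finite height is at most $10$, and supersingularity coincides with height $\geq 11$, so the supersingular locus is locally cut out by at most $10$ equations and has codimension at most $10$. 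Your Newton-polygon semicontinuity argument for closedness is fine (and is equivalent to semicontinuity of the height). With these two repairs the proposal matches the content of the results the paper cites.
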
 

\begin{proof} A
  review of (i) and (iii) can be found in \cite[p.\ 1]{Ogus}.  Statement (ii) can
  be found in \cite[2.10 (3)]{Liedtke}.  
\end{proof}

\begin{rem} The stack $\mls M_d$ is defined over $\Z$, and it follows from (ii)
  that the geometric generic fiber of $\mls M_d$ is irreducible (this follows
  also from the Torelli theorem over $\C$ and the resulting description of
  $\mls M_{d, \C}$ as a period space). Furthermore over $\Z[1/d]$ the stack
  $\mls M_d$ is smooth. In what follows we denote this stack over $\Z[1/d]$ by
  $\mls M_{d, \Z[1/2]}$ and reserve the notation $\mls M_d$ for its reduction to
  $k$.
\end{rem}

\begin{rem} Note that in the definition of $\mls M_d$ we consider ample
  invertible sheaves, and don't allow contractions in the corresponding morphism
  to projective space.
\end{rem}

\begin{pg} Let $\mls S_d$ denote the fibered category  over $k$ whose fiber
  over a scheme $S$ is the groupoid of collections of data
  \begin{equation}\label{E:theobject} ((X, \lambda ), Y, P), \end{equation}
  where $(X, \lambda )\in \mls M_{d}(S)$ is a polarized K3 surface, $Y/S$ is
  a second K3 surface over $S$, and $P\in D(X\times _SY)$ is an $S$-perfect
  complex such that for every geometric point $\bar s\rightarrow S$ the induced
  functor $$ \Phi ^{P_{\bar s}}:D(X_{\bar s})\rightarrow D(Y_{\bar s}) $$ is
  strongly filtered.  
\end{pg}

\begin{thm}\label{T:4.5} The fibered category $\mls S_d$ is an algebraic stack
  locally of finite type over $k$.  
\end{thm}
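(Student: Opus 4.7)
The plan is to realize $\mathscr{S}_d$ as an open substack of an algebraic stack built from two copies of polarized K3 moduli together with a moduli space of perfect complexes.

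First I would use the construction from the proof of Proposition \ref{P:2.8} to produce a canonical second polarization. Given $((X,\lambda), Y, P) \in \mathscr{S}_d(S)$ with $L_\lambda$ an ample line bundle representing $\lambda$, set
$$M_P := \det\bigl(R\text{pr}_{2*}(L\text{pr}_1^* L_\lambda \otimes ^{\mathbb{L}} P)\bigr),$$
an invertible sheaf on $Y$. By Lemma \ref{L:2.7}, at each geometric point $\bar s$ either $M_{P,\bar s}$ or $M_{P,\bar s}^{-1}$ is ample, and its self-intersection is controlled by $d$ and by the cohomological type $v$ of $P$ (the class of $P$ in the relevant numerical lattice, which is locally constant on $S$). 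There are only countably many such $v$ (they parametrize isometries of Mukai lattices preserving $(1,0,0)$ and taking the ample cone of $X$ to $\pm$ the ample cone of $Y$), so after decomposing $\mathscr{S}_d = \bigsqcup_v \mathscr{S}_d^v$ it suffices to prove each $\mathscr{S}_d^v$ is algebraic and locally of finite type; on each piece, $M_P$ descends (after possibly an \'etale cover to normalize the sign) to a polarization $\mu$ on $Y$ of one of finitely many degrees $d'$ determined by $d$ and $v$.

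Next I would work over the algebraic base $\mathscr{M}_d \times_k \mathscr{M}_{d'}$, which is algebraic, smooth and locally of finite type by Theorem \ref{T:2.2}. Over this base let $\mathscr{P}^v$ denote the stack whose $S$-points are quadruples $((X,\lambda), (Y,\mu), P)$ with $(X,\lambda) \in \mathscr{M}_d(S)$, $(Y,\mu) \in \mathscr{M}_{d'}(S)$, and $P \in D(X \times_S Y)$ an $S$-perfect complex of type $v$. The algebraicity of $\mathscr{P}^v$, locally of finite type, follows from the general theorem on moduli of universally gluable relatively perfect complexes on a proper flat morphism proved by Lieblich. This is the substantive external input.

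Finally I would cut $\mathscr{S}_d^v$ out of $\mathscr{P}^v$ using open conditions already established. The locus where $\Phi_{P_{\bar s}}$ is a fiberwise equivalence is open, as recalled at the start of the proof of Proposition \ref{P:2.8}; the filtered condition is open and closed and the strongly-filtered condition is then open, both by Proposition \ref{P:2.8}. Within the strongly-filtered locus, requiring the polarization $\mu$ on $Y$ to agree with the class of $M_P$ or $M_P^{-1}$ is open and closed, and identifies the remaining substack with $\mathscr{S}_d^v$ (the map that forgets $\mu$ being an equivalence, since $\mu$ is canonically recovered from the rest of the data). Taking the disjoint union over $v$ yields the theorem. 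The main obstacle is the appeal to the algebraicity of $\mathscr{P}^v$; everything else is an assembly of openness statements already proved.
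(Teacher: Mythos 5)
Your overall architecture is the paper's: produce a polarization on $Y$ from $M_P=\det(\Phi_P(L_\lambda))$, map to a product of moduli stacks of polarized K3 surfaces, and realize $\mathscr{S}_d$ as an open substack of Lieblich's stack of relatively perfect complexes. One simplification you missed: the detour through numerical types $v$ and auxiliary degrees $d'$ is unnecessary, since for a strongly filtered $\Phi_{P_{\bar s}}$ the induced map $NS(X_{\bar s})\to NS(Y_{\bar s})$ is an isometry for the intersection pairing, so $\lambda_Y^2=\lambda^2=2d$ and the target is simply $\mathscr{M}_d\times\mathscr{M}_d$.

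The genuine gap is in the appeal to algebraicity. The stack $\mathscr{P}^v$ of \emph{all} $S$-perfect complexes (of a fixed type) is not algebraic, and the theorem you cite does not apply to it: Lieblich's result concerns \emph{universally gluable} complexes, i.e.\ those with $\mathrm{Ext}^i_{X_{\bar s}\times Y_{\bar s}}(P_{\bar s},P_{\bar s})=0$ for $i<0$ in every geometric fiber, and one restricts further to \emph{simple} ones ($\mathrm{Ext}^0=k$) to get a reasonable diagonal. So your argument needs the additional input that strongly filtered kernels satisfy these conditions. This is where the paper invokes Toda's identification $\mathrm{Ext}^*_{X\times Y}(P,P)\simeq HH^*(X)$ for a Fourier--Mukai equivalence between K3 surfaces (Lemma \ref{L:2.5}), which gives vanishing in negative degrees and $\mathrm{Ext}^0=k$. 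The same vanishing is also needed earlier than you acknowledge: to know that $\mathscr{S}_d$ is a stack at all one must descend the kernels $P$, which by \cite[2.1.10]{L} again requires $H^i(P^\vee\otimes P)=0$ for $i<0$ fiberwise. Without this Hochschild-cohomological input the argument does not close; with it, your proof is the paper's.
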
 

\begin{proof}   By fppf descent
  for algebraic spaces we have descent for both polarized and unpolarized K3
  surfaces.  

  To verify descent for the kernels $P$, consider an object \eqref{E:theobject}
  over a scheme $S$.  Let $P^\vee $ denote $\mls RHom (P, \mls O_X)$.  Since $P$
  is a perfect complex we have $\mls RHom (P, P)\simeq P^\vee \otimes P$.  By
  \cite[2.1.10]{L} it suffices to show that for all geometric points $\bar
  s\rightarrow S$ we have $H^i(X_{\bar s}\times Y_{\bar s}, P^\vee _{\bar
    s}\otimes P_{\bar s}) = 0$ for $i<0$.    This follows from the following
  result (we discuss Hochschild cohomology further in Section \ref{S:section4b} below):

  \begin{lem}[{\cite[5.6]{Toda}, \cite[5.1.8]{Grigg}}]\label{L:2.5} Let $X$ and
    $Y$ be K3 surfaces over an algebraically closed field $k$, and let $P\in
    D(X\times Y)$ be a complex defining a Fourier-Mukai equivalence $\Phi
    _P:D(X)\rightarrow D(Y)$.    Denote by $HH^*(X)$ the Hochschild cohomology of
    $X$ defined as $$ \text{\rm RHom}_{X\times X}(\Delta _*\mls O_X, \Delta
    _*\mls O_X).  $$

    (i) There is a canonical isomorphism $\text{\rm Ext}^*_{X\times Y}(P, P)\simeq
    HH^*(X)$.

    (ii) $\text{\rm Ext}^i_{X\times Y}(P, P) = 0$ for $i<0$ and $i=1$.

    (iii)  The  natural map $k\rightarrow \text{\rm Ext}^0_{X\times Y}(P, P)$ is an
    isomorphism.  
  \end{lem} 

  \begin{proof} Statement (i) is \cite[5.6]{Toda}.
    Statements (ii) and (iii) follow immediately from this, since $HH^1(X) = 0$ for
    a K3 surface.  
  \end{proof}

  Next we show that for an object \eqref{E:theobject} the polarization $\lambda $
  on $X$ induces a polarization $\lambda _Y$ on $Y$.  To define $\lambda _Y$ we
  may work \'etale locally on $S$ so may assume there exists an ample invertible
  sheaf $L$ on $X$ defining $\lambda $.  The complex $$ \Phi _P(L):=
  R\text{pr}_{2*}(\text{pr}_1^*L\otimes ^{\mathbb{L}}P) $$ is $S$-perfect, and
  therefore a perfect complex on $Y$.  Let $M$ denote the determinant of $\Phi
  _P(L)$, so $M$ is an invertible sheaf on $Y$.  By our assumption that $\Phi
  ^{P_s}$ is strongly filtered for all $s\in S$, the restriction of $M$ to any
  fiber is either ample or antiample.  It follows that either $M$ or $M^\vee $ is
  a relatively ample invertible sheaf and we define $\lambda _Y$ to be the
  resulting polarization on $Y$.  Note that this does not depend on the choice of
  line bundle $L$ representing $\lambda $ and therefore by descent $\lambda _Y$
  is defined even when no such $L$ exists.

  The degree of $\lambda _Y$ is equal to $d$.  Indeed if $s\in S$ is a point then
  since $\Phi ^{P_s}$ is strongly filtered the induced map $NS(X_{\bar
    s})\rightarrow NS(Y_{\bar s})$ is compatible with the intersection pairings and
  therefore $\lambda _Y^2 = \lambda ^2 = 2d$.

  From this we deduce that $\mls S_d$ is algebraic as follows.   We have a
  morphism \begin{equation}\label{E:2.4.1} \mls S_d\rightarrow \mls M_d\times
    \mls M_d, \ \ ((X, \lambda ), Y, P)\mapsto ((X, \lambda ), (Y, \lambda _Y)),
  \end{equation} and $\mls M_d\times \mls M_d$ is an algebraic stack.  Let $\mls
  X$ (resp. $\mls Y$) denote the pullback to $\mls M_d\times \mls M_d$ of the
  universal family over the first factor  (resp. second factor).  Sending a
  triple $((X, \lambda ), Y, P)$ to $P$ then realizes $\mls S_d$ as an open
  substack of the stack over $\mls M_d\times \mls M_d$ of simple universally
  gluable complexes on $\mls X\times _{\mls M_d\times \mls M_d}\mls Y$ (see for
  example \cite[\S 5]{LO}).  
\end{proof}

\begin{pg} Observe that for any object $((X, \lambda ), Y, P)\in \mls S_d$ over
  a scheme $S$ there is an inclusion $$ \mathbb{G}_m\hookrightarrow \underline
  {\text{Aut}}_{\mls S_d}((X, \lambda ), Y, P) $$ giving by scalar
  multiplication by $P$.   We can therefore form the rigidification of $\mls
  S_d$ with respect to $\mathbb{G}_m$ (see for example \cite[\S 5]{ACV}) to get
  a morphism $$ g:\mls S_d\rightarrow \overline {\mls S}_d $$ realizing $\mls
  S_d$ as a $\mathbb{G}_m$-gerbe over another algebraic stack $\overline {\mls
    S}_d$.  By the universal property of rigidification the map $\mls
  S_d\rightarrow \mls M_d$ sending $((X, \lambda ), Y, P)$ to $(X, \lambda )$
  induces a morphism \begin{equation}\label{E:etalemap} \pi :\overline {\mls
      S}_d\rightarrow \mls M_d.  \end{equation} 
\end{pg}

\begin{thm}\label{T:4.8} The stack $\overline {\mls S}_d$ is Deligne-Mumford
  and the map \eqref{E:etalemap} is \'etale.  
\end{thm}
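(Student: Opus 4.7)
Both assertions will be extracted from Lemma \ref{L:2.5}, whose identification $\text{Ext}^*_{X\times Y}(P,P)\simeq HH^*(X)$ together with the vanishings $\text{Ext}^0(P,P)=k$ and $\text{Ext}^1(P,P)=0$ gives, respectively, the simplicity and the infinitesimal rigidity of the Fourier--Mukai kernel $P$. Simplicity will make the rigidification by $\mathbb{G}_m$ eliminate exactly the scalar ambiguity in $P$, so that $\overline{\mls S}_d$ is Deligne--Mumford; rigidity together with the deformation theory of Fourier--Mukai kernels on K3 surfaces will then yield the étaleness of $\pi$ via the infinitesimal lifting criterion.

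\noindent\textbf{Deligne--Mumford property.} An automorphism of $((X,\lambda),Y,P)\in\mls S_d(S)$ is a triple $(\alpha,\beta,\phi)$ with $\alpha\in\text{Aut}(X,\lambda)$, $\beta\in\text{Aut}(Y,\lambda_Y)$, and $\phi:(\alpha\times\beta)^*P\simeq P$ in the derived category; for fixed $(\alpha,\beta)$ the set of such $\phi$'s, when nonempty, is a torsor under $\text{Aut}(P)=\text{Ext}^0(P,P)^\times=k^\times$ by Lemma \ref{L:2.5}(iii). Consequently the rigidification by $\mathbb{G}_m$ kills exactly this scalar ambiguity, so that the inertia of $\overline{\mls S}_d$ embeds into the inertia of $\mls M_d\times\mls M_d$ along the morphism $((X,\lambda),Y,P)\mapsto((X,\lambda),(Y,\lambda_Y))$ introduced in the proof of Theorem \ref{T:4.5}. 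Since $\mls M_d$ is Deligne--Mumford by Theorem \ref{T:2.2}(i), the latter inertia is finite and unramified, and hence so is that of $\overline{\mls S}_d$.

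\noindent\textbf{Étaleness of $\pi$.} I would verify the infinitesimal lifting criterion: given a square-zero thickening $S_0\hookrightarrow S$ of Artinian local $k$-algebras, an object $((X_0,\lambda_0),Y_0,P_0)\in\overline{\mls S}_d(S_0)$, and a lift $(X,\lambda)\in\mls M_d(S)$ of $(X_0,\lambda_0)$, one must produce a lift $((X,\lambda),Y,P)\in\overline{\mls S}_d(S)$, unique up to unique isomorphism. For existence, the Hochschild isomorphism induced by $\Phi_{P_0}$ transports the Kodaira--Spencer class of $(X,\lambda)$ from $HH^2(X_0)$ to $HH^2(Y_0)$ to produce a matching deformation $(Y,\lambda_Y)$ of $(Y_0,\lambda_{Y_0})$ for which the Atiyah obstruction to lifting $P_0$ to a perfect complex on $X\times_S Y$ vanishes; Proposition \ref{P:2.8} then guarantees that the lifted $P$ stays strongly filtered. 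Uniqueness of $(Y,\lambda_Y)$ follows from the uniqueness of its Kodaira--Spencer class, and uniqueness of $P$ modulo the scalar absorbed by rigidification follows from the vanishing $\text{Ext}^1(P_0,P_0)=HH^1(X_0)=0$ of Lemma \ref{L:2.5}(ii). The main obstacle is precisely this deformation-theoretic matching step: verifying that the transported Kodaira--Spencer class actually lies in the geometric summand $H^1(T_{Y_0})\subset HH^2(Y_0)$ and corresponds to a polarized deformation of $Y_0$. For K3 surfaces this follows from the compatibility of Fourier--Mukai equivalences with the Hochschild--Kostant--Rosenberg decomposition, together with the strongly filtered hypothesis, which ensures the class of $\lambda_Y$ stays of type $(1,1)$ under the transport.
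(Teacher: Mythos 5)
Your argument is correct in outline and the Deligne--Mumford half coincides with the paper's: the paper also reduces to representability of the induced map $q:\overline{\mls S}_d\rightarrow \mls M_d\times \mls M_d$ and gets it from Lemma \ref{L:2.5} (iii), exactly your observation that rigidification absorbs the $\mathbb{G}_m$ of scalars on $P$ (note only that over a general base the torsor is under $\mls O_S^*$, not $k^\times$). For \'etaleness the two arguments diverge in how they handle the two halves of the lifting criterion. For \emph{existence} of the lift $((X,\lambda),Y,P)$ the paper does not reprove anything: it cites \cite[6.3]{LO}, and the ``deformation-theoretic matching step'' you single out as the main obstacle --- transporting the Kodaira--Spencer class through $HH^2$ and checking it lands in $H^1(T_{Y_0})$ compatibly with the polarization --- is precisely the content of that cited result, so your sketch is in the right spirit but is the one place where a self-contained proof would still require real work. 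For \emph{uniqueness} the paper takes a different and cleaner route than your Kodaira--Spencer comparison: $P_0$ exhibits $Y_0$ as an open substack of the rigidified moduli space $sD_{X/A}\otimes_A A_0$ of simple universally gluable complexes on $X$ (by \cite[5.2 (ii)]{LO}), so any deformation $Y$ of $Y_0$ admitting a lift of $P_0$ is forced to be the unique open lift of $Y_0$ inside $sD_{X/A}$; this yields uniqueness \emph{up to unique isomorphism}, which is what the stack-theoretic statement needs and which your argument (uniqueness of the KS class only pins down $Y$ up to some isomorphism) does not quite deliver without further appeal to the automorphism computation. The remaining ingredients --- liftings of $P_0$ forming a torsor under $\operatorname{Ext}^1(P_k,P_k)\otimes I=0$ and infinitesimal automorphisms being scalars in $1+I$ --- are identical in both treatments, both resting on Lemma \ref{L:2.5} (ii) and (iii).
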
 

\begin{proof} Consider
  the map \ref{E:2.4.1}.  By the universal property of rigidification this
  induces a morphism $$ q:\overline {\mls S}_d\rightarrow \mls M_d\times \mls
  M_d.  $$ Since $\mls M_d\times \mls M_d$ is Deligne-Mumford, to prove that
  $\overline {\mls S}_d$ is a Deligne-Mumford stack it suffices to show that
  $q$ is representable.  This follows from Lemma \ref{L:2.5} (iii) which implies that
  for any object $((X, \lambda ), Y, P)$ over a scheme $S$ the automorphisms of
  this object which map under $q$ to the identity are given by scalar
  multiplication on $P$ by elements of $\mls O_S^*$.

  It remains to show that the map \eqref{E:etalemap} is \'etale, and for this it
  suffices to show that it is formally \'etale. 

  Let $A\rightarrow A_0$ be a surjective map of artinian local rings with kernel
  $I$ annhilated by the maximal ideal of $A$, and let $k$ denote the residue
  field of $A_0$ so $I$ can be viewed as a $k$-vector space.  Let $((X_0, \lambda
  _0), Y_0, P_0)\in \mls S_d(A_0)$ be an object and let $(X, \lambda )\in \mls
  M_d(A)$ be a lifting of $(X_0, \lambda _0)$ so we have a commutative diagram of
  solid arrows $$ \xymatrix{ \Sp (A_0)\ar@{^{(}->}[dd]^-i\ar[r]^-{x_0}& \mls
    S_d\ar[d]\\ & \overline {\mls S}_d\ar[d]\\ \Sp (A)\ar@{-->}[ru]^-{\bar
      x}\ar@{-->}[ruu]^-x\ar[r]^-{y}& \mls M_d.} $$ Since $\mls S_d$ is a
  $\mathbb{G}_m$-gerbe over $\overline {\mls S}_d$, the obstruction to lifting a
  map $\bar x$ as indicated to a morphism $x$ is given by a class in $H^2(\Sp
  (A), \widetilde I) = 0$, and therefore any such map $\bar x$ can be lifted to a
  map $x$.  Furthermore, the set of isomorphism classes of such liftings $x$ of
  $\bar x$ is given by $H^1(\Sp (A), \widetilde I) = 0$ so in fact the lifting
  $x$ is unique up to isomorphism.  The isomorphism is not unique but determined
  up to the action of $$ \text{Ker}(A^*\rightarrow A_0^*) \simeq I.  $$ From this
  it follows that it suffices to show the following: 

  \begin{enumerate} 
  \item [(i)] The lifting $(X, \lambda )$ of $(X_0, \lambda _0)$ can be extended
    to a lifting $((X, \lambda ), Y, P)$ of $((X_0, \lambda _0), Y_0, P_0)$.
  \item [(ii)] This extension $((X, \lambda ), Y, P)$ of $(X, \lambda )$ is
    unique up to isomorphism.  \item [(iii)] The automorphisms of the triple
    $((X, \lambda ), Y, P)$ which are the identity on $(X, \lambda )$ and
    reduce to the identity over $A_0$ are all given by scalar multiplication
    on $P$ by elements of $1+I\subset A^*$.  
  \end{enumerate} 

  Statement (i) is
  shown in \cite[6.3]{LO}.

  Next we prove the uniqueness statements in (ii) and (iii).  Following the
  notation of \cite[Discussion preceding 5.2]{LO}, let $s\mls D_{X/A}$ denote the
  stack of simple, universally gluable, relatively perfect complexes on $X$, and
  let $sD_{X/A}$ denote its rigidifcation with respect to the
  $\mathbb{G}_m$-action given by scalar multiplication.    The complex $P_0$ on
  $X_0\times _{A_0}Y_0$ defines a morphism $$ Y_0\rightarrow sD_{X/A}\otimes
  _AA_0 $$ which by \cite[5.2 (ii)]{LO} is an open imbedding.  Any extension of
  $(X, \lambda )$ to a lifting $((X, \lambda ), Y, P)$  defines an open imbedding
  $Y\hookrightarrow sD_{X/A}$.  This implies that $Y$, viewed as a deformation of
  $Y_0$ for which there exists a lifting $P$ of $P_0$ to $X\times _AY$, is unique
  up to unique isomorphism.

  Let $Y$ denote the unique lifting of $Y_0$ to an open subspace of $sD_{X/A}$.
  By \cite[3.1.1 (2)]{L} the set of isomorphism classes of liftings of $P_0$ to
  $X\times _AY$ is a torsor under $$ \text{Ext}^1_{X_k\times Y_k}(P_k, P_k)\otimes I, $$
  which is $0$ by Lemma \ref{L:2.5} (ii).  From this it follows that $P$ is unique up
  to isomorphism, and also by Lemma \ref{L:2.5} (iii) we get the statement that the
  only infinitesimal automorphisms of the triple $((X, \lambda ), Y, P)$ are
  given by scalar multiplication by elements of $1+I$.  
\end{proof}

\begin{pg} There is an automorphism $$ \sigma :\mls S_d\rightarrow \mls S_d $$
  satisfying $\sigma ^2 = \text{id}$.  This automorphism is defined by sending
  a triple $((X, \lambda ), Y, P)$ to $((Y, \lambda _Y), X, P^\vee  [2])$.
  This automorphism induces an involution $\bar \sigma :\overline {\mls
    S}_d\rightarrow \overline {\mls S}_d$ over the involution $\gamma :\mls
  M_d\times \mls M_d\rightarrow \mls M_d\times \mls M_d$ switching the factors.
\end{pg}

\begin{rem}\label{R:4.10} In fact the stack $\mls S_d$ is defined over
  $\Z[1/d]$ and Theorems \ref{T:4.5} and \ref{T:4.8} also hold over $\Z[1/d]$.
  In what follows we write $\mls S_{d, \Z[1/d]}$ for this stack over $\Z[1/d]$.
\end{rem}

\section{Deformations of autoequivalences}\label{S:section4b}

In this section, we describe the obstructions to deforming
Fourier-Mukai equivalences. The requisite technical machinery for this is worked
out in \cite{HMS} and \cite{HT}. The results of this section will play a crucial
role in Section \ref{sec:supers-reduct}. 

Throughout this section let $k$ be a perfect field of positive characteristic
$p$ and ring of Witt vectors $W$. For an integer $n$ let $R_n$ denote the ring
$k[t]/(t^{n+1})$, and let $R$ denote the ring $k[[t]]$.

\begin{pg} Let $X_{n+1}/R_{n+1}$ be a smooth proper scheme over $R_{n+1}$ with
  reduction $X_n$ to $R_n$. We then have the associated \emph{relative
    Kodaira-Spencer class}, defined in
  \cite[p. 486]{HMS}, which is the morphism in $D(X_n)$
  $$
  \kappa _{X_n/X_{n+1}}:\Omega ^1_{X_n/R_n}\rightarrow \mls O_{X_n}[1]
  $$
  defined as the morphism corresponding to the short exact sequence
  $$
  \xymatrix{ 0\ar[r]& \mls O_{X_n}\ar[r]^-{\cdot dt}& \Omega
    ^1_{X_{n+1}/k}|_{X_n}\ar[r]& \Omega ^1_{X_n/R_n}\ar[r]& 0.}
  $$
\end{pg}

\begin{pg} We also have the \emph{relative universal Atiyah class} which is a
  morphism
  $$
  \alpha _n:\mls O_{\Delta _n}\rightarrow i_{n*}\Omega ^1_{X_n/R_n}[1]
  $$
  in $D(X_n\times _{R_n}X_n)$, where $i_n:X_n\rightarrow X_n\times
  _{R_n}X_n$ is the diagonal morphism and $\mls O_{\Delta _n}$ denotes $i_{n*}\mls
  O_{X_n}$.

  This map $\alpha _n$ is given by the class of the short exact sequence
  $$
  0\rightarrow I/I^2\rightarrow \mls O_{X_n\times _{R_n}X_n}/I^2\rightarrow \mls
  O_{\Delta _n}\rightarrow 0,
  $$
  where $I\subset \mls O_{X_n\times _{R_n}X_n}$ is the ideal of the diagonal.
  Note that to get the morphism $\alpha _n$ we need to make a choice of
  isomorphism $I/I^2\simeq \Omega ^1_{X_n/R_n}$, which implies that the relative
  universal Atiyah class is not invariant under the map switching the factors, but
  rather changes by $-1$.
\end{pg}

\begin{pg} Define the \emph{relative Hochschild cohomology} of $X_n/R_n$ by
  $$
  HH^*(X_n/R_n):= \text{Ext}^*_{X_n\times _{R_n}X_n}(\mls O_{\Delta _n}, \mls
  O_{\Delta _n}).
  $$
  The composition
  $$
  \xymatrix{ \mls O_{\Delta _n}\ar[r]^-{\alpha _n}& i_{n*}\Omega
    ^1_{X_n/R_n}[1]\ar[rr]^-{i_{n*}\kappa _{X_n/X_{n+1}}}& &\mls O_{\Delta _n}[2]}
  $$
  is a class
  $$
  \nu _{X_n/X_{n+1}}\in HH^2(X_n/R_n).
  $$
\end{pg}

\begin{pg}\label{P:4.4b} Suppose now that $Y_{n}/R_n$ is a second smooth proper
  scheme with a smooth lifting $Y_{n+1}/R_{n+1}$ and that $E_n\in D(X_n\times
  _{R_n}Y_n)$ is a $R_n$-perfect complex.

  Consider the class
  $$
  \nu := \nu _{X_n\times _{R_n}Y_n/X_{n+1}\times _{R_{n+1}}Y_{n+1}}:\mls
  O_{\Delta _{n, X_n\times _{R_n}Y_n}}\rightarrow \mls O_{\Delta _{n, X_n\times
      _{R_n}Y_n}}[2].
  $$
  Viewing this is a morphism of Fourier-Mukai kernels
  $$
  D(X_n\times _{R_n}Y_n)\rightarrow D(X_n\times _{R_n}Y_n)
  $$
  and applying it to $E_n$ we get a class
  $$
  \omega (E_n)\in \text{Ext}^2_{X_n\times _{R_n}Y_n}(E_n, E_n).
  $$
  In the case when
  $$
  \text{Ext}^1_{X_0\times Y_0}(E_0, E_0) = 0,
  $$
  which will hold in the cases of interest in this paper, we know by \cite[Lemma
  3.2]{HMS} that the class $\omega (E_n)$ is $0$ if and only if $E_n$ lifts to a
  perfect complex on $X_{n+1}\times _{R_{n+1}}Y_{n+1}$.
\end{pg}

\begin{pg} To analyze the class $\omega (E_n)$ it is useful to translate it into
  a statement about classes in $HH^2(Y_n/R_n)$. This is done using Toda's argument
  \cite[Proof of 5.6]{Toda}. Let
  $$
  E_n\circ :D(X_n\times _{R_n}X_n)\rightarrow D(X_n\times _{R_n}Y_n)
  $$
  denote the map sending an object $K\in D(X_n\times _{R_n}X_n)$ to the complex
  representing the Fourier-Mukai transform $\Phi _{E_n}\circ \Phi _K$. Explicitly
  it is given by the complex
  $$
  p_{13*}(p_{12}^*K\otimes p_{23}^*E_n),
  $$
  where $p_{ij}$ denote the various projections from $X_n\times _{R_n}X_n\times
  _{R_n}Y_n$. As in loc.~cit.~the diagram
  $$
  \xymatrix{ D(X_n)\ar[d]^-{i_{n*}}\ar[rd]^-{p_1^*(-)\otimes E_n}& \\
    D(X_n\times _{R_n}X_n)\ar[r]^-{E_n\circ }& D(X_n\times _{R_n}Y_n)}
  $$
  commutes.

  In particular we get a morphism
  $$
  \eta _{X}^*:HH^*(X_n/R_n)\rightarrow \text{Ext}^*_{X_n\times _{R_n}Y_n}(E_n,
  E_n).
  $$

  Now assume that both $X_n$ and $Y_n$ have relative dimension $d$ over $R_n$
  and that the relative canonical sheaves of $X_n$ and $Y_n$ over $R_n$ are trivial. Let $E_n^\vee $ denote
  $\mls RHom (E_n, \mls O_{X_n\times _{R_n}Y_n})$ viewed as an object of $D(Y_n\times _{R_n}X_n)$.  In this
  case the functor
  $$
  \Phi _{E_n^\vee [d]}:D(Y_n)\rightarrow D(X_n)
  $$
  is both a right and left adjoint of $\Phi _{E_n}$ \cite[4.5]{Bridgeland}.  By the same argument, the functor
  $$
  \circ E_n^\vee [d]:D(X_n\times _{R_n}Y_n)\rightarrow D(Y_n\times _{R_n}Y_n),
  $$
  defined in the same manner as $E_n\circ $ has left and right adjoint given by
  $$
  \circ E_n:D(Y_n\times _{R_n}Y_n)\rightarrow D(X_n\times _{R_n}Y_n).
  $$
  Composing with the adjunction maps
  \begin{equation}\label{E:adjunctionmaps}
    \alpha :\text{id}\rightarrow \circ E_n\circ E_n^\vee [d], \ \ \beta :\circ E_n\circ E_n^\vee
    [d]\rightarrow \text{id}
  \end{equation}
  applied to the diagonal $\mls O_{\Delta _{Y_n}}$
  we get a morphism
  $$
  \eta _{Y*}:\text{Ext}^*_{X_n\times _{R_n}Y_n}(E_n, E_n)\rightarrow
  HH^*(Y_n/R_n).
  $$
  We denote the composition
  $$
  \eta _{Y*}\eta _X^*:HH^*(X_n/R_n)\rightarrow HH^*(Y_n/R_n)
  $$
  by $\Phi _{E_n}^{HH^*}$. In the case when $E_n$ defines a Fourier-Mukai
  equivalence this agrees with the standard definition (see for example
  \cite{Toda}).
\end{pg}

\begin{pg}
  Evaluating the adjunction maps \eqref{E:adjunctionmaps} on $\mls O_{\Delta _{Y_n}}$ we get a morphism
  \begin{equation}\label{E:4.6.1}
    \xymatrix{ \mls O_{\Delta _{Y_n}}\ar[r]^-{\alpha }&\mls O_{\Delta
        _{Y_n}}\circ E_n\circ E_n^\vee [d]\ar[r]^-{\beta }& \mls O_{\Delta _{Y_n}}.}
  \end{equation}
  We say that $E_n$ is \emph{admissible} if this composition is the identity map.

  If $E_n$ is a Fourier-Mukai equivalence then it is clear that $E_n$ is
  admissible. Another example is if there exists a lifting $(\mls X, \mls Y, \mls
  E)$ of $(X_n, Y_n, E_n)$ to $R$, where $\mls X$ and $\mls Y$ are smooth proper
  $R$-schemes with trivial relative canonical bundles and $\mls E$ is a
  $R$-perfect complex on $\mls X\times _R\mls Y$, such that the restriction $\mls
  E$ to the generic fiber defines a Fourier-Mukai equivalence. Indeed in this case
  the map \eqref{E:4.6.1} is the reduction of the corresponding map $\mls
  O_{\Delta _{\mls Y}}\rightarrow \mls O_{\Delta _{\mls Y}}$ defined over $R$,
  which in turn is determined by its restriction to the generic fiber.
\end{pg}
\begin{pg} Consider Hochschild homology
  $$
  HH_i(X_n/R_n):= H^{-i}(X_n, Li_n^*\mls O_{\Delta _n}).
  $$ 
  By the argument of \cite[\S 5]{Caldararu1} we also get an action
  $$
  \Phi _{E_n}^{HH_*}:HH_*(X_n/R_n)\rightarrow HH_*(Y_n/R_n).
  $$
  Hochschild homology is a module over Hochschild cohomology, and an
  exercise (that we do not write out here) shows that the following diagram
  $$
  \xymatrixcolsep{5pc}\xymatrix{ HH^*(X_n/R_n)\times HH_*(X_n/R_n)\ar[r]^-{\Phi _{E_n}^{HH^*}\times
      \Phi _{E_n}^{HH_*}}\ar[d]^-{\text{mult}}&HH^*(Y_n/R_n)\times
    HH_*(Y_n/R_n)\ar[d]^-{\text{mult}}\\ HH_*(X_n/R_n)\ar[r]^-{\Phi _{E_n}^{HH_*}}&
    HH_*(Y_n/R_n)}
  $$
  commutes.
\end{pg}

\begin{pg} Using this we can describe the obstruction $\omega (E_n)$ in a
  different way, assuming that $E_n$ is admissible. First note that viewing the relative Atiyah class of $X_n\times
  _{R_n}Y_n$ as a morphism of Fourier-Mukai kernels we get the Atiyah class of
  $E_n$ which is a morphism
  $$
  A(E_n):E_n\rightarrow E_n\otimes \Omega ^1_{X_n\times _{R_n}Y_n/R_n}[1]
  $$
  in $D(X_n\times _{R_n}Y_n)$. There is a natural decomposition
  $$
  \Omega ^1_{X_n\times _{R_n}Y_n/R_n}\simeq p_1^*\Omega ^1_{X_n/R_n}\oplus
  p_2^*\Omega ^1_{Y_n/R_n},
  $$
  so we can write $A(E_n)$ as a sum of two maps
  $$
  A(E_n)_X:E_n\rightarrow E_n\otimes p_1^*\Omega ^1_{X_n/R_n}[1], \ \
  A(E_n)_Y:E_n\rightarrow E_n\otimes p_2^*\Omega ^1_{X_n/R_n}[1].
  $$
  Similarly the Kodaira-Spencer class of $X_n\times _{R_n}Y_n$ can be written as
  the sum of the two pullbacks
  $$
  p_1^*\kappa _{X_n/X_{n+1}}:p_1^*\Omega ^1_{X_n/R_n}\rightarrow p_1^*\mls
  O_{X_n}[1], \ \ p_2^*\kappa _{Y_n/Y_{n+1}}:p_2^*\Omega ^1_{Y_n/R_n}\rightarrow
  p_2^*\mls O_{Y_n}[1].
  $$
  It follows that the obstruction $\omega (E_n)$ can be written as a sum
  $$
  \omega (E_n) = (p_1^*\kappa _{X_n/X_{n+1}}\circ A(E_n)_X)+ (p_2^*\kappa
  (Y_n/Y_{n+1})\circ A(E_n)_Y).
  $$
  Now by construction we have
  $$
  \eta _{X_n}^*(\nu _{X_n/X_{n+1}}) = p_1^*\kappa _{X_n/X_{n+1}}\circ A(E_n)_X,
  $$
  and
  $$
  \eta _{Y_n*}(p_2^*\kappa (Y_n/Y_{n+1})\circ A(E_n)_Y) = -\nu _{Y_n/Y_{n+1}},
  $$
  the sign coming from the asymmetry in the definition of the relative Atiyah
  class (it is in the verification of this second formula that we use the assumption that $E_n$ is admissible). Summarizing we find the formula
  \begin{equation}\label{E:keyformula} \eta _{Y_n*}(\omega (E_n)) = \Phi
    _{E_n}^{HH^*}(\nu _{X_n/X_{n+1}})-\nu _{Y_n/Y_{n+1}}.
  \end{equation}

  In the case when $\Phi _{E_n}$ is an equivalence the maps $\eta _{Y_n*}$ and
  $\eta _{X_n}^*$ are isomorphisms, so the obstruction $\omega (E_n)$ vanishes if
  and only if we have
  $$
  \Phi _{E_n}^{HH^*}(\nu _{X_n/X_{n+1}})-\nu _{Y_n/Y_{n+1}} = 0.
  $$
\end{pg}

\begin{rem} By \cite[Remark 2.3 (iii)]{HMS}, the functor $\Phi _{E_n}$ is an
  equivalence if and only if $\Phi _{E_0}:D(X_0)\rightarrow D(Y_0)$ is an
  equivalence.
\end{rem}

\begin{cor} Suppose $F_n\in D(X_n\times _{R_n}Y_n)$ defines a Fourier-Mukai
  equivalence, and that $E_n\in D(X_n\times _{R_n}Y_n)$ is another admissible $R_n$-perfect
  complex such that $\Phi _{F_n}^{HH^*} = \Phi _{E_n}^{HH^*}$. If $E_n$ lifts to a
  $R_{n+1}$-perfect complex $E_{n+1}\in D(X_{n+1}\times _{R_{n+1}}Y_{n+1})$ then
  so does $F_n$.
\end{cor}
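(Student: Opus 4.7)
The plan is to chain together the key formula \eqref{E:keyformula} applied once to $E_n$ (to extract Hochschild information from the hypothesis that $E_n$ lifts) and once to $F_n$ (to deduce that $F_n$ lifts from that Hochschild information). The argument is short because all the heavy lifting has already been done in establishing \eqref{E:keyformula}; the corollary is essentially a formal manipulation.

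In more detail, I would proceed as follows. First, since $E_n$ admits a lift $E_{n+1}$, the obstruction $\omega(E_n)\in \text{Ext}^2_{X_n\times _{R_n}Y_n}(E_n,E_n)$ vanishes by the construction of $\omega$ as an obstruction class. Applying $\eta_{Y_n*}$ and invoking the admissibility of $E_n$ together with formula \eqref{E:keyformula} then gives
$$
0 \;=\; \eta_{Y_n*}(\omega(E_n)) \;=\; \Phi_{E_n}^{HH^*}(\nu_{X_n/X_{n+1}}) - \nu_{Y_n/Y_{n+1}},
$$
so $\Phi_{E_n}^{HH^*}(\nu_{X_n/X_{n+1}}) = \nu_{Y_n/Y_{n+1}}$ in $HH^2(Y_n/R_n)$. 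By hypothesis $\Phi_{F_n}^{HH^*} = \Phi_{E_n}^{HH^*}$, and therefore the same identity holds with $F_n$ in place of $E_n$.

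Next I would run \eqref{E:keyformula} in reverse for $F_n$. Because $F_n$ defines a Fourier-Mukai equivalence, it is automatically admissible (as noted in the discussion preceding the formula), so \eqref{E:keyformula} applies and gives
$$
\eta_{Y_n*}(\omega(F_n)) \;=\; \Phi_{F_n}^{HH^*}(\nu_{X_n/X_{n+1}}) - \nu_{Y_n/Y_{n+1}} \;=\; 0.
$$
Since $F_n$ is an equivalence, $\eta_{Y_n*}$ is an isomorphism (as recorded in the final paragraph of the preceding discussion), hence $\omega(F_n)=0$.

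Finally, to conclude that vanishing of the obstruction actually produces a lift, I would cite \cite[Lemma 3.2]{HMS} in the form used in \ref{P:4.4b}: this requires $\text{Ext}^1_{X_0\times Y_0}(F_0,F_0)=0$, which holds because $F_0$ defines a Fourier-Mukai equivalence between K3 surfaces, so Lemma \ref{L:2.5}(ii) applies. Therefore $F_n$ lifts to an $R_{n+1}$-perfect complex $F_{n+1}\in D(X_{n+1}\times_{R_{n+1}}Y_{n+1})$, as desired. The only real subtlety is bookkeeping: one must confirm that the admissibility hypothesis is in force on both sides (automatic for $F_n$, assumed for $E_n$) and that the Ext-vanishing needed to turn $\omega(F_n)=0$ into an actual lift is available, both of which are immediate in the K3 setting of interest.
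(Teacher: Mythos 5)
Your proof is correct and follows essentially the same route as the paper: both arguments combine the key formula \eqref{E:keyformula} with the equality $\Phi_{F_n}^{HH^*}=\Phi_{E_n}^{HH^*}$ to deduce $\eta_{Y_n*}(\omega(F_n))=\eta_{Y_n*}(\omega(E_n))=0$, and then use that $\eta_{Y_n*}$ is an isomorphism for the equivalence $F_n$ to conclude $\omega(F_n)=0$. Your extra remarks on admissibility of $F_n$ and the $\operatorname{Ext}^1$-vanishing needed to convert $\omega(F_n)=0$ into an actual lift are correct bookkeeping that the paper leaves implicit.
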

\begin{proof} Indeed the condition that $\Phi _{F_n}^{HH^*} = \Phi
  _{E_n}^{HH^*}$ ensures that
  $$
  \eta _{Y_n*}(\omega (E_n) ) = \eta _{Y_n*}(\omega (F_n)),
  $$
  and since $\omega (E_n) = 0$ we conclude that $\omega (F_n) = 0$.
\end{proof}




\begin{pg} The next step is to understand the relationship between $\Phi
  _{E_n}^{HH^*}$ and the action of $\Phi _{E_n}$ on the cohomological realizations
  of the Mukai motive.

  Assuming that the characteristic $p$ is bigger than the dimension of $X_0$
  (which in our case will be a K3 surface so we just need $p>2$) we can
  exponentiate the relative Atiyah class to get a map
  $$
  \exp (\alpha _n):\mls O_{\Delta _n}\rightarrow \oplus _ii_{n*}\Omega
  ^i_{X_n/R_n}
  $$
  which by adjunction defines a morphism
  \begin{equation}\label{E:HKRiso} Li_n^*\mls O_{\Delta _n}\rightarrow \oplus
    _i\Omega ^i_{X_n/R_n}
  \end{equation} in $D(X_n)$. By \cite[Theorem 0.7]{AC}, which also holds
  in positive characteristic subject to the bounds on dimension, this map is an
  isomorphism. We therefore get an isomorphism
  $$
  I^{HKR}:HH^*(X_n/R_n)\rightarrow HT^*(X_n/R_n),
  $$
  where we write
  $$
  HT^*(X_n/R_n) := \oplus _{p+q=*}H^p(X_n, \bigwedge ^qT_{X_n/R_n}).
  $$
  We write
  $$
  I^K_{X_n}:HH^*(X_n/R_n)\rightarrow HT^*(X_n/R_n)
  $$
  for the composition of $I^{HKR}$ with multiplication by the inverse square root of
  the Todd class of $X_n/R_n$, as in \cite[1.7]{Caldararu2}.

  The isomorphism \eqref{E:HKRiso} also defines an isomorphism
  $$
  I_{HKR}:HH_*(X_n/R_n)\rightarrow H\Omega _*(X_n/R_n),
  $$
  where
  $$
  H\Omega _*(X_n/R_n):= \oplus _{q-p=*}H^p(X_n, \Omega ^q_{X_n/R_n}).
  $$
  We write
  $$
  I_{K}^{X_n}:HH_*(X_n/R_n)\rightarrow H\Omega _*(X_n/R_n)
  $$
  for the composition of $I_{HKR}$ with multiplication by the square root of the
  Todd class of $X_n/R_n$.

  We will consider the following condition $(\star )$ on a $R_n$-perfect complex
  $E_n\in D(X_n\times _{R_n}Y_n)$:
\end{pg}

\begin{enumerate}
\item [$(\star )$] The diagram
  $$
  \xymatrix{ HH_*(X_n/R_n)\ar[r]^-{\Phi _{E_n}^{HH_*}}\ar[d]^-{I_K^{X_n}}&
    HH_*(Y_n/R_n)\ar[d]^-{I_K^{Y_n}}\\ H\Omega _*(X_n/R_n)\ar[r]^-{\Phi
      _{E_n}^{H\Omega _*}}& H\Omega _*(Y_n/R_n)}
  $$
  commutes.
\end{enumerate}

\begin{rem} We expect this condition to hold in general. Over a field of
  characteristic $0$ this is shown in \cite[1.2]{MS}. We expect that a careful
  analysis of denominators occurring of their proof will verify $(\star )$ quite
  generally with some conditions on the characteristic relative to the dimension
  of the schemes. However, we will not discuss this further in this paper.
\end{rem}

\begin{pg} There are two cases we will consider in this paper were $(\star )$ is
  known to hold:
  \begin{enumerate}
  \item [(i)] If $E_n = \mls O_{\Gamma _n}$ is the structure sheaf of the graph
    of an isomorphism $\gamma _n:X_n\rightarrow Y_n$. In this case the induced maps
    on Hochschild cohomology and $H\Omega _*$ are simply given by pushforward
    $\gamma _{n*}$ and condition $(\star )$ immediately holds.
  \item [(ii)] Suppose $B\rightarrow R_n$ is a morphism from an integral domain
    $B$ which is flat over $W$ and that there exists a lifting $(\mls X, \mls Y,
    \mls E)$ of $(X_n, Y_n, E_n)$ to $B$, where $\mls X$ and $\mls Y$ are proper and
    smooth over $B$ and $\mls E\in D(\mls X\times _B\mls Y)$ is a $B$-perfect
    complex pulling back to $E_n$. Suppose further that the groups $HH^*(\mls X/B)$
    and $HH^*(\mls Y/B)$ are flat over $B$ and their formation commutes with base
    change (this holds for example if $\mls X$ and $\mls Y$ are K3 surfaces). Then
    $(\star )$ holds. Indeed it suffices to verify the commutativity of the
    corresponding diagram over $B$, and this in turn can be verified after passing
    to the field of fractions of $B$. In this case the result holds by
    \cite[1.2]{MS}.
  \end{enumerate}
\end{pg}

\begin{lem}\label{L:4.13} Let $E_n, F_n\in D(X_n\times _{R_n}Y_n)$ be two
  $R_n$-perfect complexes satisfying condition $(\star )$. Suppose further that
  the maps $\Phi _{E_0}^{\text{\rm crys} }$ and $\Phi _{F_0}^\text{\rm crys} $ on
  the crystalline realizations $\widetilde H(X_0/W)\rightarrow \widetilde
  H(Y_0/W)$ of the Mukai motive are equal. Then the maps $\Phi _{E_n}^{HH_*}$ and
  $\Phi _{F_n}^{HH_*}$ are also equal. Furthermore if the maps on the crystalline
  realizations are isomorphisms then $\Phi _{E_n}^{HH_*}$ and $\Phi _{F_n}^{HH_*}$
  are also isomorphisms.
\end{lem}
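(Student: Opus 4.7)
The plan is to reduce the claimed equality of Hochschild homology actions, via $(\star)$, to an equality of actions on relative Hodge cohomology; then to deduce the Hodge equality from the crystalline hypothesis using Hodge-to-de Rham degeneration and the crystalline--de Rham comparison; and finally to promote the special-fiber equality to an equality over $R_n$ via base change and the rigidity of K3 deformation theory.

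First, applying $(\star)$ to both $E_n$ and $F_n$ and using that the $I_K$ maps are isomorphisms (HKR combined with the invertibility of $\sqrt{\mathrm{td}}$, available since $p$ exceeds the dimension), the commuting squares show that $\Phi_{E_n}^{HH_*} = \Phi_{F_n}^{HH_*}$ is equivalent to $\Phi_{E_n}^{H\Omega_*} = \Phi_{F_n}^{H\Omega_*}$ as $R_n$-linear maps on relative Hodge cohomology, and the analogous equivalence holds for ``being an isomorphism''. Second, at the special fiber: Hodge-to-de Rham degenerates for K3 surfaces in odd characteristic, so $H\Omega^*(X_0/k) = \mathrm{gr}^\bullet H^*_{dR}(X_0/k)$, and combined with the comparison $H^*_{\mathrm{crys}}(X_0/W) \otimes_W k \cong H^*_{dR}(X_0/k)$ (compatibly with Fourier-Mukai actions via the Chern character in each realization), the hypothesis $\Phi_{E_0}^{\mathrm{crys}} = \Phi_{F_0}^{\mathrm{crys}}$ descends to $\Phi_{E_0}^{H\Omega_*} = \Phi_{F_0}^{H\Omega_*}$, with the isomorphism statement descending by the same diagram.

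It remains to promote the equality from $k$ to $R_n$. The relative Hodge cohomology $H\Omega_*(X_n/R_n)$ is flat over $R_n$ with formation commuting with base change (in the K3 setting), and $\Phi_{E_n}^{H\Omega_*}$ arises from the base-change-compatible operation of cup product with $\mathrm{ch}(E_n)\cdot \sqrt{\mathrm{td}}$ followed by pushforward. The critical input here is the $\mathrm{Ext}^1$-rigidity of K3 deformation theory (Lemma \ref{L:2.5}(ii)): it pins down the deformation $E_n$ of $E_0$ up to unique-up-to-scalar isomorphism, so one can arrange the identification $H\Omega_*(X_n/R_n) \cong H\Omega_*(X_0/k) \otimes_k R_n$ in such a way that $\Phi_{E_n}^{H\Omega_*}$ becomes the $R_n$-linear extension of $\Phi_{E_0}^{H\Omega_*}$, at which point the equality over $k$ propagates to $R_n$ and invertibility does too. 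This last step is the main technical obstacle: a general $R_n$-linear map between flat $R_n$-modules is not determined by its reduction mod $t$, so naive Nakayama does not suffice, and the genuine input is the $\mathrm{Ext}^1$-vanishing that selects the correct lift of the special-fiber Chern character.
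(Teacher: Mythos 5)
Your first two reductions coincide with the paper's: condition $(\star)$ together with the invertibility of the $I_K$ maps reduces the claim to equality of the induced maps on $H\Omega_*$, and at the special fibre the crystalline hypothesis controls the Hodge/de Rham maps via the standard comparisons. The gap is in your final step, which you yourself identify as the crux. The $\mathrm{Ext}^1$-rigidity of Lemma \ref{L:2.5}(ii) cannot carry it, for two reasons. First, $E_n$ and $F_n$ are not two lifts of a common complex: the hypothesis is only that the \emph{reductions} $E_0$ and $F_0$ induce the same map on the crystalline Mukai motive (in the intended application one of them is the structure sheaf of a graph and the other an unrelated kernel), so uniqueness of the lift of a fixed $E_0$ gives no handle on comparing $\Phi_{E_n}$ with $\Phi_{F_n}$. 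Second, even for a single complex, $\mathrm{Ext}^1$-vanishing yields uniqueness of the lift up to isomorphism but produces no trivialization $H\Omega_*(X_n/R_n)\cong H\Omega_*(X_0/k)\otimes_k R_n$ under which $\mathrm{ch}(E_n)$ becomes the constant extension of $\mathrm{ch}(E_0)$; that is exactly the rigidity you need, and it does not follow from deformation theory of the kernel.

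The paper's mechanism for this step is different and is the real content of the proof: one does not work with $H\Omega_*$ directly (it is not a crystal and admits no canonical base-change rigidity), but views it as the associated graded of the filtered de Rham realization $\widetilde H_{\mathrm{dR}}(X_n/R_n)$, identifies the latter with crystalline cohomology relative to $W[t]/(t^{n+1})$, and then invokes the Berthelot--Ogus isomorphism $\widetilde H_{\mathrm{crys}}(X_n/W[t]/(t^{n+1}))\otimes\Q\cong \widetilde H_{\mathrm{crys}}(X_0/W)\otimes_W W[t]/(t^{n+1})\otimes\Q$, which is functorial and compatible with Chern classes. Under this identification both $\Phi_{E_n}^{\mathrm{crys}}$ and $\Phi_{F_n}^{\mathrm{crys}}$ become the $W[t]/(t^{n+1})$-linear extensions of $\Phi_{E_0}^{\mathrm{crys}}$ and $\Phi_{F_0}^{\mathrm{crys}}$, so the hypothesis immediately gives equality (and, when the fibre maps are isomorphisms, invertibility) over $R_n$, and then on the associated graded. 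You should replace your last paragraph with this argument.
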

\begin{proof} Since $H\Omega _*(X_n/R_n)$ (resp.~$H\Omega _*(Y_n/R_n)$) is
  obtained from the de Rham realization $\widetilde H_{\text{dR}}(X_n/R_n)$
  (resp.~$\widetilde H_{\text{dR}}(X_n/R_n)$) of the Mukai motive of $X_n/R_n$
  (resp.~$Y_n/R_n$) by passing to the associated graded, it suffices to show that
  the two maps
  $$
  \Phi _{E_n}^{\text{dR}}, \Phi _{F_n}^{\text{dR}}: \widetilde
  H_{\text{dR}}(X_n/R_n)\rightarrow \widetilde H_{\text{dR}}(Y_n/R_n)
  $$
  are equal, and isomorphisms when the crystalline realizations are
  isomorphisms. By the comparison between crystalline and de Rham cohomology it
  suffices in turn to show that the two maps on the crystalline realizations
  $$
  \Phi _{E_n}^{\text{crys}}, \Phi _{F_n}^{\text{crys}}: \widetilde
  H_{\text{crys}}(X_n/W[t]/(t^{n+1}))\rightarrow \widetilde
  H_{\text{crys}}(Y_n/W[t]/(t^{n+1}))
  $$
  are equal. Via the Berthelot-Ogus isomorphism \cite[2.2]{BO}, which is
  compatible with Chern classes, these maps are identified after tensoring with
  $\Q$ with the two maps obtained by base change from
  $$
  \Phi _{E_0}^{\text{crys}}, \Phi _{F_0}^{\text{crys}}: \widetilde
  H_{\text{crys}}(X_0/W)\rightarrow \widetilde H_{\text{crys}}(Y_0/W).
  $$
  The result follows.
\end{proof}

\begin{pg} In the case when $X_n$ and $Y_n$ are K3 surfaces the action of
  $HH^*(X_n/R_n)$ on $HH_*(X_n/R_n)$ is faithful. Therefore from Lemma \ref{L:4.13} we
  obtain the following.
\end{pg}

\begin{cor}\label{C:4.15} Assume that $X_n$ and $Y_n$ are K3 surfaces and that
  $E_n, F_n\in D(X_n\times _{R_n}Y_n)$ are two $R_n$-perfect complexes satisfying
  condition $(\star )$. Suppose further that $\Phi ^{\text{\rm crys}}_{E_0}$ and
  $\Phi ^{\text{\rm crys}}_{F_0}$ are equal on the crystalline realizations of the
  Mukai motives of the reductions. Then $\Phi _{E_n}^{HH^*}$ and $\Phi
  _{F_n}^{HH^*}$ are equal.
\end{cor}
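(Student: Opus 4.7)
The plan is to descend the problem from Hochschild cohomology to Hochschild homology, where Lemma \ref{L:4.13} applies, and then climb back up using the $HH^*$-module structure on $HH_*$ together with the faithfulness of this action recorded in the paragraph immediately preceding the statement.

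First I apply Lemma \ref{L:4.13} to obtain
$$
\Phi_{E_n}^{HH_*} = \Phi_{F_n}^{HH_*}
$$
as maps $HH_*(X_n/R_n)\to HH_*(Y_n/R_n)$. In the situations of interest one of $E_n, F_n$ will define a Fourier-Mukai equivalence, so the common crystalline map is an isomorphism; the furthermore clause of Lemma \ref{L:4.13} then implies that $\Phi_{E_n}^{HH_*}$ is itself an isomorphism.

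Next I combine this equality with the commutative square relating $\Phi^{HH^*}$, $\Phi^{HH_*}$, and the multiplication maps recorded earlier in the section. For any $\alpha\in HH^*(X_n/R_n)$ and any $\beta\in HH_*(X_n/R_n)$ this gives
$$
\Phi_{E_n}^{HH^*}(\alpha)\cdot \Phi_{E_n}^{HH_*}(\beta) = \Phi_{E_n}^{HH_*}(\alpha\cdot\beta) = \Phi_{F_n}^{HH_*}(\alpha\cdot\beta) = \Phi_{F_n}^{HH^*}(\alpha)\cdot \Phi_{E_n}^{HH_*}(\beta),
$$
so that $\Phi_{E_n}^{HH^*}(\alpha)-\Phi_{F_n}^{HH^*}(\alpha)$ annihilates the image of $\Phi_{E_n}^{HH_*}$. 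Since that image is all of $HH_*(Y_n/R_n)$, the faithfulness of the $HH^*(Y_n/R_n)$-action on $HH_*(Y_n/R_n)$ forces the difference to vanish, yielding $\Phi_{E_n}^{HH^*} = \Phi_{F_n}^{HH^*}$. The main potential subtlety is precisely the surjectivity of $\Phi_{E_n}^{HH_*}$: without it, one could only kill a proper submodule of $HH_*(Y_n/R_n)$ and faithfulness would not suffice. This is handled by Lemma \ref{L:4.13} whenever the crystalline realizations are isomorphisms, which is automatic in the setting where one of the kernels defines a Fourier-Mukai equivalence.
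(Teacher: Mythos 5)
Your proof is correct and follows essentially the same route as the paper's one-line argument: Lemma \ref{L:4.13} gives equality on Hochschild homology, and the compatibility of $\Phi^{HH^*}$ with $\Phi^{HH_*}$ via the module structure, together with faithfulness of the action of $HH^*$ on $HH_*$, lifts this to cohomology. The one substantive point is the surjectivity of $\Phi_{E_n}^{HH_*}$, which you correctly flag as necessary (faithfulness only kills elements annihilating all of $HH_*(Y_n/R_n)$) and which you secure by assuming the common crystalline map is an isomorphism; that hypothesis is not literally in the statement of the corollary, but the paper's own proof silently needs the same thing, and in the only application (Corollary \ref{P:4.15}, where $F_n$ is the graph of an isomorphism) it holds automatically. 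So your write-up is, if anything, more careful than the original.
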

\begin{proof} Indeed since homology is a faithful module over cohomology the
  maps $\Phi _{E_n}^{HH^*}$ and $\Phi _{F_n}^{HH^*}$ are determined by the maps on
  Hochschild homology which are equal by Lemma \ref{L:4.13}.
\end{proof}

\begin{cor}\label{P:4.15} Let $X_{n+1}$ and $Y_{n+1}$ be K3 surfaces over
  $R_{n+1}$ and assume given an admissible  $R_{n+1}$-perfect complex $E_{n+1}$ on
  $X_{n+1}\times _{R_{n+1}}Y_{n+1}$ such that $E_n$ satisfies condition $(\star
  )$. Assume given an isomorphism $\sigma _n:X_n\rightarrow Y_n$ over $R_n$ such
  that the induced map $\sigma _0:X_0\rightarrow Y_0$ defines the same map on
  crystalline realizations of the Mukai motive as $E_0$. Then $\sigma _n$ lifts to
  an isomorphism $\sigma _{n+1}:X_{n+1}\rightarrow Y_{n+1}$.
\end{cor}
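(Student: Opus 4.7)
The plan is to reduce the lifting of the isomorphism $\sigma_n$ to the lifting of its structure sheaf of graph, and then show that any lift of this kernel necessarily comes from a geometric lift of $\sigma_n$. The main tools are the unnamed corollary immediately preceding \ref{P:4.15} (comparing lifts of perfect complexes with the same $\Phi^{HH^*}$) together with Corollary \ref{C:4.15} (equating the Hochschild cohomology maps of admissible kernels agreeing crystallinely).

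Let $\Gamma_n \subset X_n \times_{R_n} Y_n$ denote the graph of $\sigma_n$, and set $F_n := \mathcal{O}_{\Gamma_n}$. Then $\Phi_{F_n} = \sigma_{n*}$ is a Fourier--Mukai equivalence, and hence $F_n$ is admissible; moreover, by example (i) immediately preceding Lemma \ref{L:4.13}, $F_n$ satisfies condition $(\star)$. Our hypothesis gives $\Phi_{\sigma_0*}^{\text{crys}} = \Phi_{E_0}^{\text{crys}}$ on $\widetilde{H}(X_0/W) \to \widetilde{H}(Y_0/W)$, and since $E_n$ satisfies $(\star)$ by assumption, Corollary \ref{C:4.15} applies to yield
$$\Phi_{F_n}^{HH^*} = \Phi_{E_n}^{HH^*}.$$
Now the unnamed corollary before \ref{P:4.15} applies with $E_n$ the given admissible complex (which lifts to $E_{n+1}$ by hypothesis) and $F_n = \mathcal{O}_{\Gamma_n}$: we conclude that $F_n$ lifts to some $R_{n+1}$-perfect complex $F_{n+1} \in D(X_{n+1} \times_{R_{n+1}} Y_{n+1})$.

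It remains to show that this lift $F_{n+1}$ is, up to isomorphism, the structure sheaf of the graph of some isomorphism $\sigma_{n+1}: X_{n+1} \to Y_{n+1}$ lifting $\sigma_n$. First, since $F_n$ is concentrated in cohomological degree zero and $F_{n+1}$ is $R_{n+1}$-flat, a standard d\'evissage using the short exact sequence $0 \to (t^{n+1})/(t^{n+2}) \to R_{n+1} \to R_n \to 0$ shows that $F_{n+1}$ is itself a sheaf (its cohomology sheaves in other degrees vanish by the long exact sequence and the vanishing of the corresponding cohomology sheaves of $F_n$ and of $F_0$ tensored with the square-zero ideal). Let $Z_{n+1} \subset X_{n+1} \times_{R_{n+1}} Y_{n+1}$ be the scheme-theoretic support of $F_{n+1}$; it is a closed subscheme whose reduction to $R_n$ is $\Gamma_n$, and which is flat over $R_{n+1}$ (as the annihilator of a flat sheaf of "graph type").

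The key step is to verify that $Z_{n+1}$ is the graph of a morphism. The projection $p_1: Z_{n+1} \to X_{n+1}$ is a proper morphism between flat $R_{n+1}$-schemes whose reduction $p_1: \Gamma_n \to X_n$ is an isomorphism; by the fiberwise criterion for isomorphism in flat families (or by a Nakayama-type argument applied to the structure map), $p_1$ is then itself an isomorphism. The same argument applies to $p_2: Z_{n+1} \to Y_{n+1}$. Setting $\sigma_{n+1} := p_2 \circ p_1^{-1}: X_{n+1} \to Y_{n+1}$ gives the required lift of $\sigma_n$. The main obstacle in carrying out this sketch is the subtle step of showing $F_{n+1}$ is concentrated in degree zero and identifying $Z_{n+1}$ as a graph rather than merely a closed subscheme sitting over a graph; however, flatness over the Artinian base $R_{n+1}$ and the identification of the reductions makes both points follow from standard Nakayama and deformation-theoretic arguments.
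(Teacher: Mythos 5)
Your first two steps are exactly the paper's argument: form $F_n=\mls O_{\Gamma_{\sigma_n}}$, use Corollary \ref{C:4.15} to get $\Phi_{F_n}^{HH^*}=\Phi_{E_n}^{HH^*}$, and then invoke the obstruction comparison (the unnamed corollary, i.e.\ formula \eqref{E:keyformula}) to kill $\omega(F_n)$. Where you diverge is at the end: the paper simply declares that ``the obstruction to lifting $\sigma_n$ is equal to the obstruction to lifting $E_n$,'' implicitly identifying the obstruction to deforming the \emph{isomorphism} with the Hochschild obstruction class $\omega(\mls O_{\Gamma_{\sigma_n}})$ of the \emph{kernel}; you instead actually lift the kernel to a perfect complex $F_{n+1}$ and then argue geometrically that $F_{n+1}$ must be the structure sheaf of the graph of an isomorphism. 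Your route is more self-contained (it does not presuppose the identification of the two obstruction theories), at the cost of the extra d\'evissage.

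That extra d\'evissage is where your write-up needs repair. The reduction of $F_{n+1}$ to a flat sheaf concentrated in degree $0$ is fine (take derived restriction to the closed fiber and run the standard Tor spectral sequence/Nakayama argument). But the passage through the scheme-theoretic support is shaky as written: the annihilator of a flat sheaf need not cut out a flat subscheme, and a priori $Z_{n+1}\times_{R_{n+1}}R_n$ only \emph{contains} $\Gamma_n$ (annihilators can shrink under base change), so ``reduction equal to $\Gamma_n$, hence flat, hence $p_1$ is an isomorphism'' does not go through directly. The standard fix avoids $Z_{n+1}$ altogether: since $p_1$ is finite on the support, $G:=p_{1*}F_{n+1}$ is a coherent sheaf on $X_{n+1}$, flat over $R_{n+1}$, whose restriction to the closed fiber is $\mls O_{X_0}$; by the local criterion of flatness $G$ is an invertible $\mls O_{X_{n+1}}$-module, and the resulting map $p_{1*}\mls O_{Z_{n+1}}\rightarrow \mls End(G)=\mls O_{X_{n+1}}$ splits the structure map and is injective, so $p_1\colon Z_{n+1}\rightarrow X_{n+1}$ is an isomorphism and $F_{n+1}\simeq\mls O_{Z_{n+1}}$ is the graph of a morphism lifting $\sigma_n$, which is then automatically an isomorphism. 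With that substitution your proof is complete and correct.
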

\begin{proof} Indeed by \eqref{E:keyformula} and the fact that $\Phi
  ^{HH^*}_{E_n}$ and $\Phi ^{HH^*}_{\Gamma _{\sigma _n}}$ are equal by
  Corollary \ref{C:4.15}, we see that the obstruction to lifting $\sigma _{n}$ is equal to
  the obstruction to lifting $E_n$, which is zero by assumption.
\end{proof}

\section{A remark on reduction types}\label{S:section4}

\begin{pg} In the proof of Theorem \ref{T:1.2} we need the following Theorem
  \ref{T:2.2v1}, whose proof relies on known characteristic $0$ results obtained
  from Hodge theory. In Section \ref{S:section10} below we give a different
  algebraic argument for Theorem \ref{T:2.2v1} in a special case which suffices
  for the proof of Theorem \ref{T:1.2}.
\end{pg}

\begin{pg} Let $V$ be a complete discrete valuation ring with field of fractions
  $K$ and residue field $k$. Let $X/V$ be a projective K3 surface with generic fiber $X_K$,
  and let $Y_K$ be a second K3 surface over $K$ such that the geometric fibers
  $X_{\overline K}$ and $Y_{\overline K}$ are derived equivalent.
\end{pg}

\begin{thm}\label{T:2.2v1} Under these assumptions the K3 surface $Y_K$ has
  potentially good reduction.
\end{thm}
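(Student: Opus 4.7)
The plan is to reduce to the potentially good reduction criterion for K3 surfaces due to Liedtke and Matsumoto, which asserts (using essentially Hodge-theoretic input through Kulikov--Persson--Pinkham style degenerations lifted to characteristic $0$) that a K3 surface $Z_K$ over $K$ has potentially good reduction whenever there exists a prime $\ell \neq p$ such that $H^2(Z_{\overline K}, \Q_\ell)$ is an unramified $G_K$-representation. This is the source of the Hodge-theoretic dependence flagged in the statement.

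First I would pass to a finite extension of $K$ (harmless for proving potentially good reduction) so that the Fourier-Mukai kernel $P$ defining the equivalence $D(X_{\overline K}) \simeq D(Y_{\overline K})$ descends to an object $P_K \in D(X_K \times_K Y_K)$; this is a standard spreading-out argument using that the moduli stack of simple, universally gluable, relatively perfect complexes (as used in the proof of Theorem \ref{T:4.5}) is locally of finite presentation, so any $\overline K$-point comes from a point over a finite extension. Once $P_K$ is defined over $K$, the induced isomorphism
$$
\Phi_{P_K}^{\et}: \widetilde H(X_{\overline K}, \Q_\ell) \To \widetilde H(Y_{\overline K}, \Q_\ell)
$$
on $\ell$-adic Mukai motives is $G_K$-equivariant, since it is assembled from Chern characters, pullbacks, and pushforwards along morphisms defined over $K$. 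Because $X_K$ has good reduction (namely the model $X$ over $V$), smooth and proper base change gives that $H^i(X_{\overline K}, \Q_\ell)$ is unramified for every $i$, and hence so is the whole Mukai motive $\widetilde H(X_{\overline K}, \Q_\ell)$.

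Transporting unramifiedness across $\Phi_{P_K}^{\et}$ shows that $\widetilde H(Y_{\overline K}, \Q_\ell)$ is unramified. The weight decomposition
$$
\widetilde H(Y_{\overline K}, \Q_\ell) \;=\; H^0(Y_{\overline K}, \Q_\ell(-1)) \oplus H^2(Y_{\overline K}, \Q_\ell) \oplus H^4(Y_{\overline K}, \Q_\ell(1))
$$
is canonical and therefore Galois-equivariant, so its middle summand $H^2(Y_{\overline K}, \Q_\ell)$ is itself an unramified $G_K$-representation. Applying the Liedtke--Matsumoto criterion then yields that $Y_K$ has potentially good reduction, completing the proof.

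The only genuinely hard step is the last one: the good reduction criterion is a deep Hodge-theoretic theorem, and everything else in the argument is a formal spreading-out-and-base-change manipulation. A secondary technical point one must be careful about is ensuring that the isomorphism on Mukai motives is honestly $G_K$-equivariant after the descent of $P$, but this is automatic from the construction of $\Phi_{P_K}^{\et}$. Section \ref{S:section10} of the paper presumably bypasses the Hodge-theoretic input in the special case relevant to the proof of Theorem \ref{T:1.2} by giving an alternative algebraic construction of a good model for $Y_K$, e.g.\ as a moduli space of sheaves on $X_V$.
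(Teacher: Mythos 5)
There is a genuine gap at the final, load-bearing step. The ``Liedtke--Matsumoto criterion'' you invoke does not assert that unramifiedness of $H^2(Y_{\overline K}, \Q_\ell)$ by itself implies potentially good reduction. Their theorem has the form: \emph{if} $Y_K$ has potentially semistable reduction (a Kulikov-type model after a finite extension) \emph{and} $H^2$ is unramified, \emph{then} $Y_K$ has good reduction. In residue characteristic $p>0$ the existence of potentially semistable reduction is not known in general: it rests on conjectural resolution of singularities and toroidization of morphisms, or, unconditionally, on Maulik's construction, which requires a polarization $L$ with $p>L^2+4$ (exactly the hypothesis of Theorem \ref{T:GR} in Section \ref{S:section10}). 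The paper makes precisely this point in the remark following Theorem \ref{T:2.2v1}; your argument is the one recorded there as a \emph{conditional} alternative route, not as the proof.

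The paper's actual proof is different and unconditional: by \cite[1.1 (1)]{LO}, after a finite extension $Y_K$ is a moduli space of sheaves on $X_K$. One takes the kernel $E$ of the inverse equivalence, forms the Mukai vector $\nu =\Phi_E(0,0,1)=(r,[L_X],s)$, arranges by \cite[8.1]{LO} that $r$ is prime to $p$ and $L_X$ is very ample, spreads $\nu$ out over $V$, and then the relative moduli space $M_h(\nu)/V$ of semistable sheaves on $X/V$ is a smooth projective relative K3 surface with $M_h(\nu)_{\overline K}\simeq Y_{\overline K}$ by \cite[3.16]{LO} and \cite[8.2]{LO}; this exhibits the good model directly. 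The earlier parts of your argument (descent of $P$ to a finite extension, Galois-equivariance of $\Phi_{P_K}^{\et}$, unramifiedness of the Mukai motive of $X$) are correct and are indeed what the paper's remark uses to see that $H^2(Y_{\overline K},\Q_\ell)$ is unramified, but without the semistable-reduction input they do not yield the theorem.
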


\begin{rem} Here potentially good reduction means that after possibly replacing
  $V$ be a finite extension there exists a K3 surface $Y/V$ whose generic fiber
  is $Y_K$.
\end{rem}

\begin{proof}[Proof of Theorem \ref{T:2.2v1}] We use \cite[1.1 (1)]{LO} which implies
  that after replacing $V$ by a finite extension $Y_K$ is isomorphic to a moduli
  space of sheaves on $X_K$.

  After replacing $V$ by a finite extension we may assume that we have a complex
  $P\in D(X\times Y)$ defining an equivalence $$ \Phi _P:D(X_{\overline
    K})\rightarrow D(Y_{\overline K}). $$ Let $E\in D(Y\times X)$ be the complex
  defining the inverse equivalence $$ \Phi _E:D(Y_{\overline K})\rightarrow
  D(X_{\overline K}) $$ to $\Phi _P$. Let $\nu := \Phi _E(0,0,1)\in
  A^*(X_{\overline K})_{\text{num}, \Q}$ be the Mukai vector of a fiber of $E$ at
  a closed point $y\in Y_{\overline K}$ and write $$ \nu = (r, [L_X], s)\in
  A^0(X_{\overline K})_{\text{num}, \Q}\oplus A^1(X_{\overline K})_{\text{num},
    \Q}\oplus A^2(X_{\overline K})_{\text{num}, \Q}. $$ By \cite[8.1]{LO} we may
  after possible changing our choice of $P$, which may involve another extension
  of $V$, assume that $r$ is prime to $p$ and that $L_X$ is very ample. Making
  another extension of $V$ we may assume that $\nu $ is defined over $K$, and
  therefore by specialization also defines an element, which we denote by the same
  letter, $$ \nu = (r, [L_X], s)\in \Z\oplus \text{Pic}(X)\oplus \Z. $$ This class
  has the property that $r$ is prime to $p$ and that there exists another class
  $\nu '$ such that $\langle \nu, \nu '\rangle = 1$. This implies in particular
  that $\nu $ restricts to a primitive class on the closed fiber. Fix an ample
  class $h$ on $X$, and let $\mls M_h(\nu )$ denote the moduli space of semistable
  sheaves on $X$ with Mukai vector $\nu $. By \cite[3.16]{LO} the stack $\mls
  M_h(\nu )$ is a $\mu _r$-gerbe over a relative K3 surface $M_h(\nu )/V$, and
  by \cite[8.2]{LO} we have $Y_{\overline K}\simeq M_h(\nu )_{\overline K}$. In
  particular, $Y$ has potentially good reduction.
\end{proof}

\begin{rem} As discussed in \cite[p.~2]{LM}, to obtain Theorem \ref{T:2.2v1} it suffices
  to know that every K3 surface $Z_K$ over $K$ has potentially semistable
  reduction and this would follow from standard conjectures on resolution of
  singularities and toroidization of morphisms in mixed and positive
  characteristic. In the setting of Theorem \ref{T:2.2v1}, once we know that $Y_K$ has
  potentially semistable reduction then by \cite[Theorem on bottom of p. 2]{LM} we
  obtain that $Y_K$ has good reduction since the Galois representation
  $H^2(Y_{\overline K}, \Q_\ell )$ is unramified being isomorphic to direct
  summand of the $\ell $-adic realization $\widetilde H(X_{\overline K}, \Q_\ell
  )$ of the Mukai motive of $X_K$.
\end{rem}

\begin{pg}\label{P:4.6} One can also consider the problem of extending $Y_K$
  over a higher dimensional base. Let $B$ denote a normal finite type $k$-scheme
  with a point $s\in B(k)$ and let $X/B$ be a projective family of K3 surfaces.
  Let $K$ be the function field of $B$ and let $Y_K$ be a second K3 surface over
  $K$ Fourier-Mukai equivalent to $X_K$. Dominating $\mls O_{B, s}$ by a suitable
  complete discrete valuation ring $V$ we can find a morphism $$ \rho :\Sp
  (V)\rightarrow B $$ sending the closed point of $\Sp (V)$ to $s$ and an
  extension $Y_V$ of $\rho ^*Y_K$ to a smooth projective K3 surface over $V$. In
  particular, after replacing $B$ by its normalization in a finite extension of
  $K$ we can find a primitive polarization $\lambda _K$ on $Y_K$ of degree prime
  to the characteristic such that $\rho ^*\lambda _K$ extends to a polarization on
  $Y_V$. We then have a commutative diagram of solid arrows $$ \xymatrix{ \Sp
    (\text{Frac}(V))\ar@{^{(}->}[d]\ar[r]& \Sp (K)\ar@{^{(}->}[d]\ar@/^1pc/[rdd]& \\
    \Sp (V)\ar@/_1pc/[rrd]\ar[r]& B\ar@{-->}[rd]&\\ && \mls M_d} $$ for a suitable
  integer $d$. Base changing to a suitable \'etale neighborhood $U\rightarrow \mls
  M_d$ of the image of the closed point of $\Sp (V)$, with $U$ an affine scheme,
  we can after shrinking and possibly replacing $B$ by an alteration find a
  commutative diagram $$ \xymatrix{ \Sp (\text{Frac}(V))\ar@{^{(}->}[d]\ar[r]& \Sp
    (K)\ar@{^{(}->}[d]\ar@/^1pc/[rd]& \\ \Sp (V)\ar@/_2pc/[rr]\ar[r]& B\ar[rd]&
    U\ar@{^{(}->}[d]^-j\\ && \overline U,} $$ where $j$ is a dense open imbedding
  and $\overline U$ is projective over $k$. It follows that the image of $s$ in
  $\overline U$ in fact lands in $U$ which gives an extension of $Y_K$ to a
  neighborhood of $s$. This discussion implies the following:
\end{pg}

\begin{cor}\label{C:4.7} In the setup of Proposition \ref{P:4.6}, we can, after replacing
  $(B, s)$ by a neighborhood of a point in the preimage of $s$ in an alteration of
  $B$, find an extension of $Y_K$ to a K3 surface over $B$.
\end{cor}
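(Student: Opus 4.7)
The plan is to reinterpret the construction sketched in \ref{P:4.6} as producing a rational map $B \dashrightarrow \mls M_d$ and then extending this rational map across $s$ by choosing a projective compactification of an \'etale neighborhood in $\mls M_d$ and invoking properness. Concretely, the argument is a descent-over-a-higher-dimensional-base companion to Theorem \ref{T:2.2v1}.

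First I would apply Theorem \ref{T:2.2v1} along a DVR dominating $\mls O_{B,s}$. Pick a complete DVR $V$ with residue field $k$ and a local morphism $\rho : \Sp(V) \to B$ sending the closed point to $s$, and pull back $X$ to get a projective K3 surface $\rho^* X/V$ whose generic fiber is Fourier--Mukai equivalent to $\rho^* Y_K \otimes_K \mathrm{Frac}(V)$. By Theorem \ref{T:2.2v1}, after replacing $V$ by a finite extension, we obtain a smooth projective K3 surface $Y_V/V$ extending $\rho^* Y_K$. After replacing $B$ by its normalization in the corresponding finite extension of $K$, we can further arrange that $Y_K$ carries a primitive polarization $\lambda_K$ of some degree $d$ prime to the characteristic whose pullback to $Y_V$ is ample; this gives a $V$-valued point of $\mls M_d$ extending the $K$-valued point coming from $(Y_K, \lambda_K)$.

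Second, the pair $(Y_K, \lambda_K)$ defines a rational map $B \dashrightarrow \mls M_d$, and by the previous step this rational map extends along $\rho$. To spread out over a Zariski neighborhood of $s$, I would choose an affine \'etale neighborhood $U \to \mls M_d$ of the image of the closed point of $\Sp(V)$ together with a dense open immersion $j : U \hookrightarrow \overline{U}$ with $\overline{U}$ projective over $k$. The composition gives a rational map $B \dashrightarrow \overline{U}$. After an alteration of $B$ that resolves the indeterminacy locus of this rational map, we obtain an honest morphism from a neighborhood of a preimage of $s$ in the alteration into $\overline{U}$.

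Finally, I would argue that the image of the preimage of $s$ in $\overline{U}$ lies in the open subset $U$. The composition $\Sp(V) \to B \to \overline{U}$ factors through $U$ by construction, so the valuative criterion of separatedness applied to $\overline{U}/k$ identifies the image of the preimage of $s$ with the image of the closed point of $\Sp(V)$, which lies in $U$. Since $U \subset \overline{U}$ is open, shrinking the neighborhood further we may assume it maps entirely into $U$, hence into $\mls M_d$; pulling back the universal K3 then gives the desired extension of $Y_K$. The main obstacle is the resolution-of-indeterminacy step: one must ensure that after an alteration the rational map $B \dashrightarrow \overline{U}$ genuinely becomes a morphism near the relevant point, which in general requires working on a singular base rather than merely invoking the valuative criterion. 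This is precisely why the statement only promises an extension after passing to a neighborhood of a point in the preimage of $s$ in an alteration of $B$.
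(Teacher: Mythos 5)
Your proposal is correct and follows essentially the same route as the paper: apply Theorem \ref{T:2.2v1} over a DVR dominating $\mls O_{B,s}$ to produce a polarized extension and hence a rational map to $\mls M_d$, lift to an affine \'etale neighborhood $U$, compactify to a projective $\overline U$, resolve the indeterminacy by an alteration, and observe that the image of (a preimage of) $s$ lands in $U$ because the $\Sp(V)$-point factors through $U$. The point you flag at the end --- that the extension is only obtained near a point of the preimage of $s$ in an alteration --- is exactly the caveat built into the statement, so nothing is missing.
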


\section{Supersingular reduction}\label{sec:supers-reduct}

\begin{pg}\label{P:5.1} Let $B$ be a normal scheme of finite type over an
  algebraically closed field $k$ of odd positive characteristic $p$. Let $K$
  denote the function field of $B$ and let $s\in B$ be a closed point. Let
  $f:X\rightarrow B$ be a projective K3 surface over $B$ and let $Y_K/K$ be a second K3
  surface over $K$ such that there exists a strongly filtered Fourier-Mukai
  equivalence $$ \Phi _P:D(X_K)\rightarrow D(Y_K) $$ defined by an object $P\in
  D(X_K\times _KY_K)$. Assume further that the fiber $X_s$ of $X$ over $s$ is a
  supersingular K3 surface.
\end{pg}

\begin{pg} Using Corollary \ref{C:4.7} we can, after possibly replacing $B$ by a
  neighborhood of a point over $s$ in an alteration, assume that we have a smooth
  K3 surface $Y/B$ extending $Y_K$ and an extension of the complex $P$ to a
  $B$-perfect complex $\mls P$ on $X\times _BY$, and furthermore that the complex
  $Q$ defining the inverse of $\Phi _P$ also extends to a complex $\mls Q$ on
  $X\times _BY$. Let $f_{\mls X}:\mls X\rightarrow B$ (resp. $f_{\mls Y}:\mls
  Y\rightarrow B$) be the structure morphism, and let $\mls
  H^i_{\text{crys}}(X/B)$ (resp. $\mls H^i_{\text{crys}}(Y/B)$) denote the
  $F$-crystal $R^if_{\mls X*}\mls O_{\mls X/W}$ (resp. $R^if_{\mls Y*}\mls O_{\mls
    Y/W}$) on $B/W$ obtained by forming the $i$-th higher direct image of the
  structure sheaf on the crystalline site of $\mls X/W$ (resp. $\mls Y/W$).
  Because $\Phi _{\mls P}$ is strongly filtered, it induces an isomorphism of
  $F$-crystals $$ \Phi ^{\text{crys}, i}_{\mls P}:\mls
  H^i_{\text{crys}}(X/B)\rightarrow \mls H^i_{\text{crys}}(Y/B) $$ for all $i$,
  with inverse defined by $\Phi _{\mls Q}$. Note that since we are working here
  with K3 surfaces these morphisms are defined integrally.

  We also have the de Rham realizations $\mls H^i_{\dR}(X/B)$ and $\mls H^i_\dR
  (Y/B)$ which are filtered modules with integrable connection on $B$ equipped
  with filtered isomorphisms compatible with the connections
  \begin{equation}\label{E:A} \Phi ^{\dR , i}_{\mls P}:\mls
    H^i_{\dR}(X/B)\rightarrow \mls H^i_{\dR}(Y/B). \end{equation} as well as \'etale
  realizations $\mls H^i_\et (X/B)$ and $\mls H^i_\et (Y/B)$ equipped with
  isomorphisms \begin{equation}\label{E:B} \Phi ^{\et , i}_{\mls P}:\mls H^i_\et
    (X/B)\rightarrow \mls H^i_\et (Y/B). \end{equation}
\end{pg}

\begin{pg} Let $H^i_{\text{crys}}(X_s/W)$ (resp. $H^i_{\text{crys}}(Y_s/W)$)
  denote the crystalline cohomology of the fibers over $s$. The isomorphism $\Phi
  ^{\text{crys}, i}_{\mls P}$ induces an isomorphism $$ \theta ^i
  :H^i_{\text{crys}}(X_s/W)\rightarrow H^i_{\text{crys}}(Y_s/W) $$ of
  $F$-crystals. By \cite[Theorem I]{Ogus2} this implies that $X_s$ and $Y_s$ are
  isomorphic. However, we may not necessarily have an isomorphism which induces
  $\theta ^2$ on cohomology.
\end{pg}

\begin{pg} Recall that as discussed in \cite[10.9 (iii)]{Huybrechts} if
  $C\subset X_K$ is a $(-2)$-curve then we can perform a spherical twist $$
  T_{\mls O_{C}}:D(X_K)\rightarrow D(X_K) $$ whose action on $NS(X_K)$ is the
  reflection $$ r_C(a) := a+\langle a, C\rangle C. $$
\end{pg}

\begin{prop}\label{T:5.5} After possibly changing our choice of model $Y$ for
  $Y_K$, replacing $(B, s)$ by a neighborhood of a point in an alteration over
  $s$, and composing with a sequence of spherical twists $T_{\mls O_C}$ along
  $(-2)$-curves in the generic fiber $Y_K$, there exists an isomorphism $\sigma
  :X_s\rightarrow Y_s$ inducing the isomorphism $\theta ^2$ on the second
  crystalline cohomology group. If $\theta ^2$ preserves the ample cone of the
  generic fiber then we can find an isomorphism $\sigma $ inducing $\theta ^2$.
\end{prop}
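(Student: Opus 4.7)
The proof hinges on Ogus' crystalline Torelli theorem for supersingular K3 surfaces (\cite[Theorem~II]{Ogus2}), which asserts that an isomorphism of $F$-crystals $H^2_{\text{crys}}(X_s/W) \xrightarrow{\sim} H^2_{\text{crys}}(Y_s/W)$ mapping the ample cone $C_{X_s}$ bijectively onto $C_{Y_s}$ is induced by a unique isomorphism $\sigma : X_s \to Y_s$. Thus the task is to arrange, using the flexibility in the statement, that $\theta^2$ has this ample-preserving property and then to invoke Ogus.

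Begin with the favorable case: suppose $\theta^2$ preserves the ample cone of the generic fiber, so that after shrinking $B$ one has a relatively ample $L$ on $X$ whose image $\theta^2(L_s) = M_s$ (where $M = \det R\mathrm{pr}_{2*}(\mathrm{pr}_1^*L \otimes \mathcal{P})$ as in the proof of Proposition~\ref{P:2.8}) is ample on $Y_s$. Since $\theta^2$ is an isometry of crystals restricting to an isometry of N\'eron--Severi lattices, and $C_{X_s}$, $C_{Y_s}$ are connected components of $V_{X_s}$, $V_{Y_s}$ by \cite[Prop.~1.10, Rem.~1.10.9]{Ogus2}, the argument of Lemma~\ref{L:2.7} forces $\theta^2(C_{X_s}) = C_{Y_s}$, and Ogus' theorem supplies the desired $\sigma$.

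For the general case, $\theta^2(C_{X_s})$ is merely some chamber of $V_{Y_s}$, possibly distinct from $C_{Y_s}$: the Picard rank can jump upon specialization (a supersingular K3 has Picard rank $22$), creating $(-2)$-classes in $NS(Y_s)$ not visible on $Y_K$ that cut additional walls. The Weyl group $W_{Y_s}$ generated by reflections $r_\delta$ in $(-2)$-classes $\delta \in NS(Y_s)$ acts simply transitively on the chambers of $V_{Y_s}$, so there is a unique $w \in W_{Y_s}$ with $w \cdot \theta^2(C_{X_s}) = C_{Y_s}$; write $w = r_{\delta_1} \cdots r_{\delta_n}$. A spherical twist $T_{\mathcal{O}_C}$ along a $(-2)$-curve $C \subset Y_K$ acts on $NS(Y_K)$ --- and via specialization on $NS(Y_s)$ --- by the reflection $r_{[C]}$. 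So it suffices to realize each $\delta_i$ as the specialization of the class of a $(-2)$-curve in the generic fiber of some model of $Y_K$; composing $\Phi_P$ with the corresponding spherical twists produces a new strongly filtered equivalence whose induced map on $H^2_{\text{crys}}(Y_s)$ is $w \circ \theta^2$, reducing to the favorable case.

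The main obstacle is this lifting problem, which is precisely why the statement permits changing the model $Y$ and replacing $B$ by a neighborhood of a point in an alteration. My plan is inductive: given a $(-2)$-class $\delta \in NS(Y_s)$, perform birational surgery on the family $Y/B$ along closed-fiber $(-2)$-curves (flops of threefolds with smooth K3 fibers), each producing a new smooth model of $Y_K$ with a modified specialization map $NS(Y_K) \hookrightarrow NS(Y_s)$; after finitely many steps $\delta$ enters the image. Effectivity up to sign of $(-2)$-classes on a K3 (Riemann--Roch) combined with a finite alteration of $B$ to split reducible representatives then ensures that $\delta$ is the class of an irreducible smooth rational $(-2)$-curve on the generic fiber of the new model. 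Applying the spherical twists along these curves yields $w \circ \theta^2$ and, by the favorable case, the required isomorphism $\sigma$.
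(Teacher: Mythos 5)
Your overall skeleton --- reduce to Ogus' crystalline Torelli theorem \cite[Theorem II]{Ogus2} by moving $\theta^2(C_{X_s})$ into the ample chamber using the simply transitive action of the reflection group on the components of $V_{Y_s}$, and settle the sign in the favorable case by a Lemma \ref{L:2.7}--type argument --- is the right one. But the mechanism you propose for realizing the reflections $r_{\delta_i}$ has a genuine gap. You claim that, after finitely many flops of the family $Y/B$ along closed-fiber $(-2)$-curves, any prescribed $(-2)$-class $\delta\in NS(Y_s)$ ``enters the image'' of the specialization map $NS(Y_{\overline K})\hookrightarrow NS(Y_s)$, so that every needed reflection can then be implemented by a spherical twist on the generic fiber. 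This cannot work: a flop along a closed-fiber curve leaves the generic fiber untouched and only changes the identification of the new closed fiber with $Y_s$ (by the reflection $r_C$), so it replaces the image of specialization by an isometric translate of the same sublattice. Its rank stays equal to $\rho(Y_{\overline K})$, which in the relevant situation (a non-supersingular generic fiber degenerating to a supersingular $Y_s$) is strictly smaller than $22=\operatorname{rk}NS(Y_s)$; there is no reason a given $\delta$ should lie in any of these translates, and in general it will not. So precisely the classes you identify as the source of the difficulty --- those ``not visible on $Y_K$'' --- can never be made visible by changing the model, and your induction does not terminate.

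The repair, which is what the paper does, is to use the two allowed moves for two disjoint purposes rather than using flops only as a preparation for twists: if a $(-2)$-curve $C\subset Y_s$ lifts to a curve in the family, compose $P$ with the spherical twist along the lifted curve in $Y_K$; if it does not lift, leave $P$ alone and replace $Y$ by the flop of $Y$ along $C$ as in \cite[2.8]{Ogus2}, which directly replaces $\theta_0$ by $\gamma^*\circ r_C\circ \theta_0$ for the canonical identification $\gamma:Y'_s\rightarrow Y_s$. Iterating these two operations lands $\theta_0(C_{X_s})$ in $\pm C_{Y_s}$, and Ogus' theorem applies. Two further points you should address: Ogus' Torelli theorem is stated for isometries of N\'eron--Severi lattices compatible with the crystal structure, so you must first produce $\theta_0:NS(X_s)\rightarrow NS(Y_s)$ compatible with $\theta^2$ via \cite[4.4 and 4.5]{Ogus3} (and note that $\theta_0$ determines $\theta^2$ by the Tate conjecture); and the group acting simply transitively on the components of $V_{Y_s}$ is generated by the reflections together with $-1$, which is why the first conclusion only produces $\sigma$ inducing $\pm\theta^2$ and the sign must be fixed separately, by specializing the positivity $\langle\Phi_P(\lambda),H\rangle>0$ from the generic to the closed fiber.
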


\begin{proof} By \cite[4.4 and 4.5]{Ogus3} there exists an isomorphism $\theta
  _0:NS(X_s)\rightarrow NS(Y_s)$ compatible with $\theta ^2$ in the sense that the
  diagram
  \begin{equation}\label{E:5.4.1} \xymatrix{ NS(X_s)\ar[r]^-{\theta
        _0}\ar[d]^-{c_1}& NS(Y_s)\ar[d]^-{c_1}\\ H^2_{\text{crys}}(X_s/W)\ar[r]^-{\theta
        ^2}& H^2_{\text{crys}}(Y_s/W)}
  \end{equation} commutes. Note that as discussed in \cite[4.8]{Liedtke} the map
  $\theta _0$ determines $\theta ^2$ by the Tate conjecture for K3 surfaces,
  proven by Charles, Maulik, and Pera \cite{Ch, Maulik, Pera}. In particular, if
  we have an isomorphism $\sigma :X_s\rightarrow Y_s$ inducing $\pm \theta _0$ on
  N\'eron-Severi groups then $\sigma $ also induces $\pm \theta ^2$ on crystalline
  cohomology. We therefore have to study the problem of finding an isomorphism
  $\sigma $ compatible with $\theta _0$.

  Ogus shows in \cite[Theorem II]{Ogus2} that there exists such an isomorphism
  $\sigma $ if and only if the map $\theta _0$ takes the ample cone to the ample
  cone. So our problem is to choose a model of $Y$ in such a way that $\pm \theta
  _0$ preserves ample cones. Set $$ V_{X_s}:= \{x\in NS(X_s)_{\R}|\text{$x^2>0$
    and $\langle x, \delta \rangle \neq 0$ for all $\delta \in NS(X_s)$ with $\delta
    ^2 = -2$}\}, $$ and define $V_{Y_s}$ similarly. Being an isometry the map
  $\theta _0$ then induces an isomorphism $V_{X_s}\rightarrow V_{Y_s}$, which we
  again denote by $\theta _0$. Let $R_{Y_s}$ denote the group of automorphisms of
  $V_{Y_s}$ generated by reflections in $(-2)$-curves and multiplication by $-1$.
  By \cite[Proposition 1.10 and Remark 1.10.9]{Ogus2} the ample cone of $Y_s$ is a
  connected component of $V_{Y_s}$ and the group $R_{Y_s}$ acts simply
  transitively on the set of connected components of $V_{Y_s}$.

  Let us show how to change model to account for reflections by $(-2)$-curves in
  $Y_s$. We show that after replacing $(P, Y)$ by a new pair $(P', Y')$ consisting
  of the complex $P\in D(X_K\times _KY_K)$ obtained by composing $\Phi _P$ with a
  sequence of spherical twists along $(-2)$-curves in $Y_K$ and replacing $Y$ by a
  new model $Y'$ there exists an isomorphism $\gamma :Y'_s\rightarrow Y_s$ such
  that the composition $$ \xymatrix{ NS(X_s)\ar[r]^-{\theta _0}&
    NS(Y_s)\ar[r]^-{r_C}& NS(Y_s)\ar[r]^-{\gamma ^*}& NS(Y'_s)} $$ is equal to the
  map $\theta '_0$ defined as for $\theta _0$ but using the model $Y'$.

  Let $C\subset Y_s$ be a $(-2)$-curve, and let
  $$ r_C:NS(Y_s)\rightarrow NS(Y_s)$$ be the reflection in the $(-2)$-curve.  
  If $C$ lifts to a curve in the family $Y$ we get a $(-2)$-curve in the generic
  fiber and so by replacing our $P$ by the complex $P'$ obtained by composition
  with the spherical twist by this curve in $Y_K$ (see \cite[10.9
  (iii)]{Huybrechts}) and setting $Y' = Y$ we get the desired new pair. If $C$
  does not lift to $Y$, then we take $P' = P$ but now replace $Y$ by the flop of
  $Y$ along $C$ as explained in \cite[2.8]{Ogus2}.

  Thus after making a sequence of replacements $(P, Y)\mapsto (P', Y')$ we can
  arrange that $\theta _0$ sends the ample cone of $X_s$ to plus or minus the
  ample cone of $Y_s$, and therefore we get our desired isomorphism $\sigma $.

  To see the last statement, note that we have modified the generic fiber by
  composing with reflections along $(-2)$-curves.  Therefore if $\lambda $ is an
  ample class on $X$ with restriction $\lambda _K$ to $X_K$, and for a general
  ample divisor $H$ we have $\langle \Phi _P(\lambda ), H\rangle >0$, then the
  same holds on the closed fiber. This implies that the ample
  cone of $X_s$ gets sent to the ample cone of $Y_s$ and not its negative.
\end{proof}

\begin{rem} One can also consider \'etale or de Rham cohomology in Theorem \ref{T:5.5}.
  Assume we have applied suitable spherical twists and chosen a model $Y$ such
  that we have an isomorphism $\sigma :X_s\rightarrow Y_s$ inducing $\pm \theta
  _0$. We claim that the maps $$ \theta _\dR :H_\dR ^i(X_s/k)\rightarrow H_\dR
  ^i(Y_s/k), \ \ \theta _\et :H_\et ^i(X_s, \Q_\ell )\rightarrow H_\et ^i(Y_s,
  \Q_\ell ) $$ induced by the maps \eqref{E:A} and \eqref{E:B} also agree with the
  maps defined by $\pm \sigma $. For de Rham cohomology this is clear using the
  comparison with crystalline cohomology, and for the \'etale cohomology it
  follows from compatibility with the cycle class map.
\end{rem}

\begin{pg} With notation as assumptions as in Theorem \ref{T:5.5} assume further
  that $B$ is a curve or a complete discrete valuation ring, and that we have
  chosen a model $Y$ such that each of the reductions satisfies condition $(\star
  )$ and such that the map $\theta _0$ in \eqref{E:5.4.1} preserves plus or minus
  the ample cones. Let $\sigma :X_s\rightarrow Y_s$ be an isomorphism inducing
  $\pm \theta _0$.
\end{pg}
\begin{lem}\label{P:5.7} The isomorphism $\sigma $ lifts to an isomorphism $\tilde
  \sigma :X\rightarrow Y$ over the completion $\widehat B$ at $s$ inducing the
  maps defined by $\pm \Phi ^{\text{\rm crys}, i} _{\mls P}$.
\end{lem}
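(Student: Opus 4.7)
The plan is to work over the completion $R := \widehat{\mls O}_{B,s}$, set $R_n := R/\mathfrak{m}^{n+1}$, and write $X_n := X \times_B \Sp(R_n)$, $Y_n := Y \times_B \Sp(R_n)$, $\mls P_n := \mls P|_{X_n \times_{R_n} Y_n}$. I will inductively extend $\sigma = \sigma_0$ to a compatible tower of isomorphisms $\sigma_n : X_n \to Y_n$ using Corollary \ref{P:4.15}, and then algebraize the resulting formal isomorphism over $\widehat B$.

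First I will arrange the base case compatibility: $\sigma_0$ and $\mls P_0$ must define the same map on the crystalline Mukai motive of $X_0$. On $H^2_{\text{crys}}$ both maps equal $\pm \theta^2$, by construction of $\sigma$ and because $\Phi_{\mls P}$ is strongly filtered. On $H^0$ both act as the identity (the isomorphism $\sigma$ automatically, and $\Phi^{\text{crys}}_{\mls P_0}$ because strongly filtered implies $(1,0,0) \mapsto (1,0,0)$), and similarly on $H^4$ by duality under the Mukai pairing. After replacing $\mls P$ by $\mls P[1]$ if necessary to absorb an overall sign (the shift negates the action on the Mukai motive), I obtain literal equality $\sigma_0^* = \Phi^{\text{crys}}_{\mls P_0}$ on the full crystalline Mukai motive of the closed fiber.

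Next I will iterate Corollary \ref{P:4.15}. At each stage the hypotheses are verified as follows: $\mls P_{n+1}$ is an $R_{n+1}$-perfect lift of $\mls P_n$ supplied by $\mls P$ itself; each $\mls P_n$ is admissible because it defines a Fourier-Mukai equivalence (the equivalence locus in $B$ is open by Proposition \ref{P:2.8} and contains $s$); the complex $\mls P_n$ satisfies condition $(\star)$ by hypothesis of the lemma; and the base-case crystalline compatibility of $\sigma_0$ and $\mls P_0$ has just been arranged. Iterating yields a compatible tower $\{\sigma_n\}$ of lifts.

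Finally I will algebraize. The family $Y/B$ is projective because $\Phi_{\mls P}$ being strongly filtered produces a polarization $\lambda_Y$ on $Y/B$, as in the proof of Theorem \ref{T:4.5}. Hence $X \times_B Y$ is projective over $B$, and by Grothendieck's formal existence theorem the limit of the graphs $\Gamma_{\sigma_n} \subset X_n \times_{R_n} Y_n$ algebraizes to a closed subscheme of $X_{\widehat B} \times_{\widehat B} Y_{\widehat B}$, which is necessarily the graph of an isomorphism $\tilde\sigma : X_{\widehat B} \to Y_{\widehat B}$. The compatibility $\tilde\sigma^* = \pm \Phi^{\text{crys}, i}_{\mls P}$ on the $F$-crystals $\mls H^i_{\text{crys}}(X/B)$ holds on the closed fiber by construction, and extends over all of $\widehat B$ by horizontality with respect to the Gauss-Manin connection. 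The main obstacle is the careful sign bookkeeping in the base case: ensuring that, after the shift adjustment, $\sigma_0$ literally induces the same map as $\mls P_0$ (or its negative) on the full Mukai motive, rather than merely agreeing on $H^2$ with uncontrolled signs on $H^0$ and $H^4$; everything else is formal given the deformation-theoretic machinery of Section \ref{S:section4b}.
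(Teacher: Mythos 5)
Your overall strategy---arrange equality of $\sigma_0^*$ and $\Phi^{\text{crys}}_{\mls P_0}$ on the Mukai motive of the closed fiber, iterate Corollary \ref{P:4.15} to lift $\sigma$ through the infinitesimal neighborhoods, and algebraize with Grothendieck existence---is exactly the paper's argument. But your handling of the sign in the base case has a genuine gap. If $\sigma_0$ induces $-\theta^2$ on $H^2_{\text{crys}}$, the discrepancy between $\sigma_0^*$ and $\Phi^{\text{crys}}_{\mls P_0}$ lives \emph{only} on the degree-$2$ part: as you yourself note, both maps are the identity on $H^0$ and $H^4$ (the isomorphism tautologically, and $\Phi_{\mls P_0}$ because it is strongly filtered). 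Replacing $\mls P$ by $\mls P[1]$ negates the action on \emph{all three} graded pieces of the Mukai motive simultaneously, so after the shift the two maps agree on $H^2$ but now disagree by a sign on $H^0$ and $H^4$. The shift cannot absorb a sign that is concentrated in degree $2$, so you never obtain the literal equality on the full Mukai motive that Corollary \ref{P:4.15} requires, and the induction cannot start.

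The correct resolution is not to adjust $\mls P$ but to show the bad sign does not occur: this is precisely Proposition \ref{P:2.10}, applied with $\Phi_Q$ the (Torelli) equivalence given by the graph of $\sigma$. Its proof shows that a filtered correspondence fixing $(1,0,0)$ and acting as $-1$ on $NS$ while acting as $+1$ on $A^0\oplus A^2$ contradicts compatibility with the Mukai pairing ($-H^2=H^2$). Hence $\Phi_{\mls P}$ genuinely preserves the ample cone of the closed fiber, $\theta_0$ takes ample cone to ample cone, and by Ogus's Torelli theorem $\sigma$ can be chosen so that $\sigma_0^*=\theta^2$ exactly. With that substitution for your second paragraph, the rest of your argument (admissibility of $\mls P_n$ because it is an equivalence, condition $(\star)$ by hypothesis, uniqueness of the lifts, algebraization, and verification of the cohomological compatibility on the closed fiber) goes through as in the paper.
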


\begin{proof} By Proposition \ref{P:2.10} in fact $\Phi _{\mls P}$ preserves the ample cone
  of the closed fiber and so we can choose $\sigma $ such that the map on
  cohomology is $\theta _0$. By Corollary \ref{P:4.15} $\sigma $ lifts uniquely to each
  infinitesimal neighborhood of $s$ in $B$, and therefore by the Grothendieck
  existence theorem we get a lifting $\tilde \sigma $ over $\widehat B$. That the
  realization of $\tilde \sigma $ on cohomology agrees with $\pm \Phi ^{\text{\rm
      crys}, i}_{\mls P}$ can be verified on the closed fiber where it holds by
  assumption.
\end{proof}

\begin{lem}\label{C:5.8} With notation and assumptions as in Proposition \ref{P:5.7} the map
  $\Phi _P^{A^*_{\text{\rm num}, \Q}}$ preserves the ample cones of the generic
  fibers.
\end{lem}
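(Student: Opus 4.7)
The plan is to apply Proposition \ref{P:2.10} with the Torelli equivalence furnished by the isomorphism $\tilde\sigma$ of Lemma \ref{P:5.7}. Because $\Phi_P$ is strongly filtered, the three hypotheses imposed on $\Phi_P^{A^*_{\text{num},\Q}}$ by Proposition \ref{P:2.10}---preservation of the codimension filtration, sending $(1,0,0)$ to $(1,0,0)$, and sending the ample cone to $\pm$ the ample cone---are automatic. What remains is to produce a Torelli Fourier--Mukai equivalence $D(X_K)\to D(Y_K)$ whose N\'eron--Severi action agrees up to sign with that of $\Phi_P$.

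By Lemma \ref{P:5.7} there is an isomorphism $\tilde\sigma:X_{\widehat B}\to Y_{\widehat B}$ over the completion $\widehat B$ at $s$ whose action on crystalline cohomology is $\pm\Phi_{\mls P}^{\text{crys},i}$; its generic fiber $\tilde\sigma_{\widehat K}$ is an honest isomorphism of K3 surfaces over $\widehat K := \text{Frac}(\widehat B)$, hence gives a Torelli equivalence. I would then verify that the N\'eron--Severi action of $\tilde\sigma_{\widehat K,*}$ agrees up to sign with that of the base change $\Phi_{P_{\widehat K}}$ by factoring both maps $NS(X_{\widehat K})\to NS(Y_{\widehat K})$ through the injections $NS(X_{\widehat K})\hookrightarrow NS(X_s)$ (injective because the Picard scheme of a family of K3 surfaces is unramified) and $NS(X_s)\hookrightarrow H^2_{\text{crys}}(X_s/W)$ (injective by the cycle class map). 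Both Fourier--Mukai actions commute with specialization to the closed fiber, and they agree up to sign on $H^2_{\text{crys}}(X_s/W)$ by the construction of $\tilde\sigma$.

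Applying Proposition \ref{P:2.10} over $\widehat K$ then gives that $\Phi_{P_{\widehat K}}^{A^*_{\text{num},\Q}}$ preserves ample cones, and since ampleness descends along the flat field extension $K\hookrightarrow \widehat K$, so does $\Phi_P^{A^*_{\text{num},\Q}}$ on the generic fibers over $K$. The main technical checkpoint will be the compatibility of $\Phi_{\mls P}$ and $\tilde\sigma$ with specialization on N\'eron--Severi and the crystalline cycle class; this is a routine consequence of the fact that $\mls P$ is a $B$-perfect complex whose crystalline realization is defined integrally over $B$ and compatible with base change, so that the crystalline action on the generic fiber specializes to the corresponding action on the closed fiber.
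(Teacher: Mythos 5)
Your proposal is correct and follows essentially the same route as the paper: the paper's proof is exactly "verify after a field extension of the function field of $B$, then combine Lemma \ref{P:5.7} (which supplies $\tilde\sigma$ over $\widehat B$ inducing $\pm\Phi_{\mls P}^{\text{crys},i}$, hence a Torelli equivalence whose N\'eron--Severi action agrees up to sign with that of $\Phi_P$) with Proposition \ref{P:2.10}." Your extra verification via specialization of $NS$ to the closed fiber is harmless but not needed, since Lemma \ref{P:5.7} already gives agreement on the crystalline realization over all of $\widehat B$ and $NS$ of the generic fiber injects into its crystalline $H^2$.
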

\begin{proof} The statement can be verified after making a field extension of
  the function field of $B$. The result therefore follows from Proposition \ref{P:5.7} and
  Proposition \ref{P:2.10}.
\end{proof}

\begin{rem}\label{R:6.10} In the case when the original $\Phi _P$ preserves the
  ample cones of the geometric generic fibers, no reflections along $(-2)$--curves
  in the generic fiber are needed. Indeed, by the above argument we get an
  isomorphism $\sigma _K:X_K\rightarrow Y_K$ such that the induced map on
  crystalline and \'etale cohomology
  agrees with $\Phi _P\circ \alpha $ for some sequence $\alpha $ of spherical
  twists along $(-2)$-curves in $X_K$ (also using Corollary \ref{C:5.8}). Since both $\sigma $
  and $\Phi _P$ preserve ample cones it follows that $\alpha $ also preserves the
  ample cone of $X_{\overline K}$. By \cite[1.10]{Ogus2} it follows that $\alpha $ acts
  trivially on the N\'eron-Severi group of $X_{\overline K}$. We claim that this implies that
  $\alpha $ also acts trivially on any of the cohomological realizations. We give
  the proof in the case of \'etale cohomology $H^2(X_{\overline K}, \Q_\ell )$ (for a prime
  $\ell $ invertible in $k$) leaving slight modifications for the other cohomology
  theories to the reader. Let $\widetilde R_X$ denote the subgroup of 
  $GL(H^2(X_{\overline K},\Q_\ell ))$ generated by $-1$ and the action induced by spherical twists along
  $(-2)$-curves in $X_{\overline K}$, and consider the inclusion of $\Q_\ell $-vector spaces
  with inner products $$ NS(X_{\overline K})_{\Q_\ell }\hookrightarrow H^2(X_{\overline K}, \Q_\ell ). $$ By
  \cite[Lemma 8.12]{Huybrechts} the action of the spherical twists along
  $(-2)$-curves in $X_{\overline K}$ on $H^2(X_{\overline K}, \Q_\ell )$ is by reflection across classes in
  the image of $NS(X_{\overline K})_{\Q_\ell }$. From this (and Gram-Schmidt!) it follows that
  the the group $\widetilde R_X$ preserves $NS(X_{\overline K})_{\Q_\ell }$, acts trivially on
  the quotient of $H^2(X_{\overline K}, \Q_\ell )$ by $NS(X_{\overline K})_{\Q_\ell }$, and that the
  restriction map $$ \widetilde R_X\rightarrow GL(NS(X_{\overline K})_{\Q_\ell }) $$ is
  injective. In particular, if an element $\alpha \in \widetilde R_X$ acts
  trivially on $NS(X_{\overline K})$ then it also acts trivially on \'etale cohomology.
  It follows that $\sigma $ and $\Phi _{P}$ induce the same map on realizations.
\end{rem}

\section{Specialization}\label{S:section7}

\begin{pg} We consider again the setup of Proposition \ref{P:5.1}, but now we don't assume
  that the closed fiber $X_s$ is supersingular. Further we restrict attention to
  the case when $B$ is a smooth curve, and assume we are given a smooth model
  $Y/B$ of $Y_K$ and a $B$-perfect complex $\mls P\in D(X\times _BY)$ such that
  for all geometric points $\bar z \rightarrow B$ the induced complex $\mls P_{\bar
    z}$ on $X_{\bar z}\times Y_{\bar z}$ defines a strongly filtered equivalence
  $D(X_{\bar z})\rightarrow D(Y_{\bar z})$.

  Let $\mls H^i(X/B)$ (resp. $\mls H^i(Y/B)$) denote either $\mls H^i_\et (X/B)$
  (resp. $\mls H^i_\et (Y/B)$) for some prime $\ell \neq p$ or $\mls H^i_\text{\rm
    crys} (X/B)$ (resp. $\mls H^i_\text{\rm crys} (Y/B)$). Assume further given an
  isomorphism $$ \sigma _K:X_K\rightarrow Y_K $$ inducing the map given by
  restricting $$ \Phi ^{ i}_{\mls P}:\mls H^i(X/B)\rightarrow \mls H^i(Y/B) $$ to
  the generic point.
\end{pg}

\begin{rem} If we work with \'etale cohomology in this setup we could also
  consider the spectrum of a complete discrete valuation ring instead of $B$, and
  in particular also a mixed characteristic discrete valuation ring.
\end{rem}

\begin{rem} When the characteristic of $k$ is zero we can also use de Rham
  cohomology instead of \'etale cohomology.
\end{rem}

\begin{prop}\label{P:6.4} The isomorphism $\sigma _K $ extends to an isomorphism
  $\sigma :X\rightarrow Y$.
\end{prop}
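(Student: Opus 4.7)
The plan is to equip both families with compatible relative polarizations and then invoke separatedness of the moduli stack of polarized K3 surfaces to extend $\sigma_K$ across $B$. The statement is local on $B$, so I may assume $B=\Sp(V)$ for a discrete valuation ring $V$ with fraction field $K$.

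First, I would construct a relative polarization on $Y/B$ compatible with a chosen one on $X/B$. Pick a relatively ample line bundle $L$ on $X/B$ representing a polarization $\lambda_X$, and as in the proof of Theorem \ref{T:4.5} form the determinant
$$M := \det\bigl(R\text{pr}_{2*}(\text{pr}_1^*L \otimes^{\mathbb{L}} \mls P)\bigr)$$
on $Y$. By the fiberwise strongly filtered hypothesis combined with Lemma \ref{L:2.7}, exactly one of $M$ or $M^\vee$ defines a relative polarization $\lambda_Y$ on $Y/B$, and $\lambda_Y^2 = \lambda_X^2$ on every fiber.

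Next, I would verify $\sigma_K^*\lambda_{Y,K} = \lambda_{X,K}$ in $NS(X_K)$. Since $\mls H^2$ is either $\ell$-adic or crystalline cohomology and the cycle class map $NS(\cdot)_{\Q}\hookrightarrow \mls H^2$ is injective on a K3 surface and compatible with both $\sigma_K^*$ and $\Phi_{\mls P}^{\mls H^2}$ (compare Lemma \ref{L:2.5b} and its crystalline analog), the hypothesis that $\sigma_K^* = \Phi_{\mls P}^2$ on $\mls H^2$ together with the construction of $\lambda_Y$ from $L$ forces $\sigma_K^*\lambda_{Y,K} = \pm\lambda_{X,K}$; the sign is $+$ since both classes are ample.

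Finally, the pairs $(X,\lambda_X)$ and $(Y,\lambda_Y)$ define two morphisms $B\to \mls M_d$ which are isomorphic at the generic point via $\sigma_K$. Separatedness of $\mls M_d$, which for polarized K3 surfaces is the classical Matsusaka--Mumford theorem (available because K3 surfaces are not uniruled), implies that the relative Isom-scheme $\text{Isom}_B((X,\lambda_X),(Y,\lambda_Y))$ is proper over the normal base $B$. Hence, by the valuative criterion, the $K$-point $\sigma_K$ extends uniquely to a $B$-point, yielding the desired isomorphism $\sigma:X\to Y$. The main subtlety lies in Step 2: one needs the strongly filtered condition that $(1,0,0)\mapsto \pm(1,0,0)$ to cleanly isolate the $NS$-part of $\Phi_{\mls P}$ as a pullback of line bundles (as opposed to contributions from $A^0$ or $A^2$), and to convert the a~priori $\pm$ ambiguity on the ample cone into the single $+$ sign once ampleness is invoked.
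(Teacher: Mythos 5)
Your argument is correct and is essentially the paper's: both proofs use the strongly filtered kernel $\mls P$ to produce a relatively ample class on $Y$ agreeing with $\sigma_{K*}L$ on the generic fiber (via the injectivity of the cycle class map and the hypothesis that $\sigma_K$ and $\Phi_{\mls P}$ agree on $\mls H^2$), and both then conclude by Matsusaka--Mumford, which you package as separatedness of the moduli of polarized K3 surfaces and properness of the Isom scheme while the paper applies \cite[Theorem 2]{MM} directly to the closure of the graph of $\sigma_K$.
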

\begin{proof} We give the argument here for \'etale cohomology in the case when
  $B$ is the spectrum of a discrete valuation ring, leaving the minor
  modifications for the other cases to the reader.

  Let $Z\subset X\times _BY$ be the closure of the graph of $\sigma _K$, so $Z$
  is an irreducible flat $V$-scheme of dimension $3$ and we have a correspondence
  $$ \xymatrix{ & Z\ar[ld]_-{p}\ar[rd]^-q& \\ X&& Y.} $$ Fix an ample line bundle
  $L$ on $X$ and consider the line bundle $M:= \text{det}(Rq_*p^*L)$ on $Y$. The
  restriction of $M$ to $Y_K$ is simply $\sigma _{K*}L$, and in particular the
  \'etale cohomology class of $M$ is equal to the class of $ \Phi _{\mls P}(L)$.
  By our assumption that $\Phi _{\mls P}$ is strongly filtered in the fibers the
  line bundle $M$ is ample on $Y$. Note also that by our assumption that $\Phi
  _{\mls P}$ is strongly filtered in every fiber we have $$ \Phi _{\mls
    P}(L^{\otimes n})\simeq \Phi ^{\mls P}(L)^{\otimes n}. $$ In particular we can
  choose $L$ very ample in such a way that $M$ is also very ample.  The result then follows from Matsusaka-Mumford \cite[Theorem 2]{MM}.
\end{proof}

\section{Proof of Theorem \ref{T:1.2}}\label{S:section8}

\begin{pg} Let $K$ be an algebraically closed field extension of $k$ and let $X$
  and $Y$ be K3 surfaces over $K$ equipped with a complex $P\in D(X\times _KY)$
  defining a strongly filtered Fourier-Mukai equivalence $$ \Phi
  _P:D(X)\rightarrow D(Y). $$ We can then choose a primitive polarization $\lambda
  $ on $X$ of degree prime to $p$ such that the triple $((X, \lambda ), Y, P)$
  defines a $K$-point of $\mls S_d$. In this way the proof of Theorem \ref{T:1.2} is
  reformulated into showing the following: For any algebraically closed field $K$
  and point $((X, \lambda ), Y, P)\in \mls S_d(K)$ there exists an isomorphism
  $\sigma :X\rightarrow Y$ such that the maps on crystalline and \'etale
  realizations defined by $\sigma $ and $\Phi _P$ agree.
\end{pg}

\begin{pg}\label{P:extendfield}
  To prove this it suffices to show that there exists such an isomorphism after replacing $K$ by a field extension.  To see this  let $I$ denote the scheme of isomorphisms between $X$ and $Y$, which is a
  locally closed subscheme of the Hilbert scheme of $X\times _KY$. Over $I$ we
  have a tautological isomorphism $\sigma ^u:X_I\rightarrow Y_I$. The condition
  that the induced action on $\ell $-adic \'etale cohomology agrees with $\Phi _P$
  is an open and closed condition on $I$. It follows that there exists a subscheme
  $I'\subset I$ classifying isomorphisms $\sigma $ as in the theorem. This implies
  that if we can find an isomorphism $\sigma $ over a field extension of $K$ then
  such an isomorphism also exists over $K$. 
\end{pg}

\begin{pg} By Proposition \ref{P:6.4} it suffices to show that the result holds for each
  generic point of $\mls S_d$. By Theorem \ref{T:4.8} any such generic point maps to a
  generic point of $\mls M_d$ which by Theorem \ref{T:2.2} admits a specialization to a
  supersingular point $x\in \mls M_d(k)$ given by a family $(X_R, \lambda _R)/R$,
  where $R$ is a complete discrete valuation ring over $k$ with residue field
  $\Omega $, for some algebraically closed field $\Omega $.  By Theorem \ref{T:2.2v1} the point $(Y, \lambda _Y)\in \mls M_d(K)$ also has a limit $y\in \mls M_d(\Omega )$ given by a second family $(Y_R, \lambda _R)/R$.  Let $P'$ be the complex on $X\times Y$ giving the composition of $\Phi _P$ with suitable twists by $(-2)$-curves such that after replacing $Y_R$ by a sequence of flops the map $\Phi _{P'}$ induces an  isomorphism on crystalline cohomology on the closed fiber preserving plus or minus the ample cone.  By the Cohen structure theorem we have $R\simeq \Omega [[t]]$, and $((X, \lambda ), Y, P')$ defines a point of $\mls S_d(\Omega ((t)))$.

  Let $B$ denote the completion of the strict henselization of $\mls M_{\Z[1/d]}\times \mls M_{\Z[1/d]}$ at the point $(x, y)$.  So $B$ is a regular complete local ring with residue field $\Omega $.  Let $B'$ denote the formal completion of the strict henselization of $\overline {\mls S}_{d, \Z[1/d]}$ at the $\Omega ((t))$-point given by $((X, \lambda ), Y, P')$.  So we obtain a commutative diagram
  \begin{equation}\label{E:ringmagic}
    \xymatrix{
      B\ar[r]\ar[d]& \Omega [[t]]\ar[d]\\
      B'\ar[r]& \Omega ((t)).}
  \end{equation}
  Over $B$ we have a universal families $\mls X_B$ and $\mls Y_B$, and over the base changes to $B'$ we have, after trivializing the pullback of the gerbe $\mls S_{d, \Z[1/d]}\rightarrow \overline {\mls S}_{d, \Z[1/d]}$, a complex $\mls P_{B'}'$ on $\mls X_{B'}\times _{B'}\mls Y_{B'}$, which reduces to the triple $(X, Y, P')$ over $\Omega ((t))$.   The map $B\rightarrow B'$ is a filtering direct limit of \'etale morphisms.  We can therefore replace $B'$ by a finite type \'etale $B$-subalgebra over which all the data is defined and we still have the diagram \eqref{E:ringmagic}.  Let $\overline B$ denote the integral closure of $B$ in $B'$ so we have a commutative diagram
  $$
  \xymatrix{
    \Sp (B')\ar@{^{(}-}[r]\ar[rd]& \Sp (\overline B)\ar[d]\\
    & \Sp (B),}
  $$
  where $\overline B$ is flat over $\Z[1/d]$ and normal.  Let $Y\rightarrow \Sp (\overline B)$ be an alteration with $Y$ regular and flat over $\Z[1/d]$, and let $Y'\subset Y$ be the preimage of $\Sp (B')$.    Lifting the map $B\rightarrow \Omega [[t]]$ to a map $\Sp (\widetilde R)\rightarrow Y$ for some finite extension of complete discrete valuation rings $\widetilde R/R$ and letting $C$ denote the completion of the local ring of $Y$ at the image of the closed point of $\Sp (\widetilde R)$ we obtain a commutative diagram
  $$
  \xymatrix{
    C\ar[r]\ar[d]& \Omega [[t]]\ar[d]\\
    C'\ar[r]& \Omega ((t)),}
  $$
  where $C\rightarrow C'$ is a localization, we have K3-surfaces $\mls X_C$ and $\mls Y_C$ over $C$ and a perfect complex $\mls P_{C'}'$ on $\mls X_{C'}\times _{C'}\mls Y_{C'}$ defining a Fourier-Mukai equivalence and the triple $(\mls X_{C'}, \mls Y_{C'}, \mls P_{C'}')$ reducing to $(X, Y, P)$ over $\Omega ((t))$.  By \cite[5.2.2]{TT} we can extend the complex $\mls P'_{C'}$ to a $C$-perfect complex $\mls P'_C$ on $\mls X_C\times _C\mls Y_C$ (here we use that $C$ is regular).  It follows that the base change $(X_{\Omega [[t]]}, Y_{\Omega [[t]]}, P'_{\Omega [[t]]})$ gives an extension of $(X, Y, P)$ to $\Omega [[t]]$ all of whose reductions satisfy  our condition $(\star )$.  

  This puts us in the setting of Proposition \ref{P:5.7}, and we conclude that there exists an isomorphism
  $\sigma
  :X\rightarrow Y$ (over $\Omega ((t))$, but as noted above we are allowed to make a field extension of $K$) such that the induced map on crystalline and \'etale cohomology
  agrees with $\Phi _P\circ \alpha $ for some sequence $\alpha $ of spherical
  twists along $(-2)$-curves in $X$ (using also Corollary \ref{C:5.8}). By the
  same argument as in Remark \ref{R:6.10}  it follows that $\sigma $ and $\Phi _{P}$ induce the same map on realizations
  which concludes the proof of Theorem \ref{T:1.2}. \qed
\end{pg}

\begin{rem} One consequence of the proof is that in fact any strongly filtered
  equivalence automotically takes the ample cone to the ample cone, and not its
  negative. This is closely related to \cite[4.1]{HMS}.
\end{rem}

\section{Characteristic $0$}\label{S:section9}

From our discussion of positive characteristic results one can also deduce the
following result in characteristic $0$.

\begin{thm}\label{T:8.1} Let $K$ be an algebraically closed field of
  characteristic $0$, let $X$ and $Y$ be K3 surfaces over $K$, and let $\Phi
  _P:D(X)\rightarrow D(Y)$ be a strongly filtered Fourier-Mukai equivalence
  defined by an object $P\in D(X\times Y)$. Then there exists an isomorphism
  $\sigma :X\rightarrow Y$ whose action on $\ell $-adic and de Rham cohomology
  agrees with the action of $\Phi _P$.
\end{thm}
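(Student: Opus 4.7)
The plan is to reduce Theorem~\ref{T:8.1} to Theorem~\ref{T:1.2} by spreading out to a mixed-characteristic DVR and lifting the resulting positive-characteristic isomorphism using the infinitesimal deformation machinery of Section~\ref{S:section4b}.

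First I would choose a primitive ample polarization $\lambda$ on $X$ of some degree $d$ invertible in $K$, making $((X,\lambda),Y,P)$ a $K$-point of $\mls S_{d,\Z[1/d]}$ (Remark~\ref{R:4.10}). Spread out to a map $\Sp A\to\mls S_{d,\Z[1/d]}$ for some finitely generated $\Z[1/d]$-subalgebra $A\subseteq K$. Replacing $A$ by its normalization and localizing at a height-one prime $\mathfrak q$ lying over an odd prime $p\nmid d$, then completing and passing if necessary to a finite extension whose residue field is algebraically closed, produces a complete mixed-characteristic DVR $V$ with residue field $k$ of characteristic $p$, flat over $W=W(k)$. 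Pulling back yields $(X_V,Y_V,P_V,\lambda_V)$ whose generic fiber over $L:=\text{Frac}(V)$ is a base change of $(X,Y,P)$.

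Next, Theorem~\ref{T:1.2} applied to the closed fiber produces an isomorphism $\sigma_0\colon X_k\to Y_k$ whose action on crystalline and \'etale cohomology matches $\Phi_{P_k}$. The complex $P_V$ is admissible because it globally lifts a Fourier--Mukai equivalence over the flat $W$-algebra $V$, and each reduction $P_{V,n}$ satisfies condition~$(\star)$ by the case~(ii) discussion preceding Lemma~\ref{L:4.13}---the relevant Hochschild cohomology of K3 families being flat over the base with formation commuting with base change. Iterating Corollary~\ref{P:4.15} (whose proof adapts to mixed-characteristic square-zero thickenings) lifts $\sigma_0$ to a compatible system $\sigma_n\colon X_{V/\mathfrak m^{n+1}}\to Y_{V/\mathfrak m^{n+1}}$, which algebraizes by Grothendieck existence to an isomorphism $\hat\sigma\colon X_V\to Y_V$. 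Smooth--proper base change together with the Berthelot--Ogus comparison then ensures that $\hat\sigma$ induces the same action as $\Phi_{P_V}$ on the $\ell$-adic and de Rham realizations of the Mukai motive, a condition verifiable on the closed fiber.

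To descend from $L$ to $K$, consider the locally closed subscheme $I'\to\Sp A$ of the $A$-scheme of isomorphisms $X_A\simeq Y_A$ cut out by the requirement that the induced maps on the relative $\Q_\ell$-lisse and de Rham realizations of the Mukai motive coincide with those of $\Phi_{P_A}$; by the argument of paragraph~\ref{P:extendfield}, $I'$ is of finite type over $A$. The generic fiber of $\hat\sigma$ gives an $L$-point of $I'$, so $I'\otimes_A K$ is nonempty and of finite type over the algebraically closed field $K$, and thus has a $K$-point by the Nullstellensatz---this $K$-point is the desired $\sigma\colon X\to Y$. The main obstacle is verifying admissibility for $P_V$ and condition~$(\star)$ for its reductions in the mixed-characteristic setting; both are consequences of the global lift over the flat $W$-algebra $V$ and standard properties of Hochschild cohomology for K3 surfaces.
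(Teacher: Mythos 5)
Your argument agrees with the paper's at both ends (spreading out over a finitely generated $\Z$-algebra $A$, and concluding via the finite-type isomorphism scheme $I'_A$ as in \ref{P:extendfield}), but the middle is genuinely different and substantially heavier than what the paper does, and it is exactly there that you have a gap. The paper never lifts the positive-characteristic isomorphism back to characteristic $0$. Instead it observes that Theorem \ref{T:1.2} makes the fiber of $I'_A\to \Sp(A)$ nonempty over (the geometric points in) infinitely many residue characteristics; since the image of $I'_A$ in $\Sp(A)$ is constructible, it must then contain the generic point, so $I'_A\otimes_A K$ is nonempty and the Nullstellensatz finishes. You already have every ingredient for this soft argument --- the $k$-point of $I'_A$ produced by Theorem \ref{T:1.2} over a dense set of closed points of $\Sp(A)$ suffices --- so your entire mixed-characteristic lifting step is unnecessary.

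That lifting step is also where your proposal is not justified as written. Corollary \ref{P:4.15}, Lemma \ref{L:4.13}, condition $(\star)$ case (ii), and the key formula \eqref{E:keyformula} are all established in the paper only for equicharacteristic thickenings $R_n=k[t]/(t^{n+1})$, and the paper is deliberately careful to use them only in that setting (over $\Omega[[t]]$ in Section \ref{S:section8}). Your parenthetical ``whose proof adapts to mixed-characteristic square-zero thickenings'' is carrying real weight: the proof of Lemma \ref{L:4.13} runs through the Berthelot--Ogus comparison between de~Rham cohomology over $V/\mathfrak m^{n+1}$ and crystalline cohomology of the closed fiber, which requires divided-power hypotheses on the ideal; these hold for $V=W(k)$ with $p$ odd but are delicate over the ramified $V$ your construction may produce when you ``pass to a finite extension whose residue field is algebraically closed'' (note also that such a finite extension need not exist --- you would want the completed strict henselization, or a formal smoothness argument to land in $W(\overline{\mathbb F}_p)$ directly). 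One would likewise need to re-derive the HKR isomorphism and the Kodaira--Spencer/Atiyah-class formalism over mixed-characteristic Artinian bases. None of this is fatal --- the statements are presumably true over an unramified $V$ --- but as it stands the step is asserted rather than proved, and the paper's constructibility argument shows you can bypass it entirely.
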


\begin{proof} It suffices to show that we can find an isomorphism $\sigma $
  which induces the same map on $\ell $-adic cohomology as $\Phi _P$ for a single
  prime $\ell $. For then by compatibility of the comparison isomorphisms with
  $\Phi _P$, discussed in \cite[\S 2]{LO}, it follows that $\sigma $ and $\Phi _P$
  also define the same action on the other realizations of the Mukai motive.

  Furthermore as in Proposition \ref{P:extendfield} it suffices to prove the existence of $\sigma $ after making a field extension of $K$.

  As in Proposition \ref{P:extendfield} let $I'$ denote the scheme of isomorphisms $\sigma :X\rightarrow Y$ as in the theorem.  Note that since the action of such $\sigma $ on the ample cone is fixed, the scheme $I'$ is in fact of finite type.

  Since $X$, $Y$, and $P$ are all locally finitely presented over $K$ we can
  find a finite type integral $\Z$-algebra $A$, K3 surfaces $X_A$ and $Y_A$ over
  $A$, and an $A$-perfect complex $P_A\in D(X_A\times _AY_A)$ defining a strongly
  filtered Fourier-Mukai equivalence in every fiber, and such that $(X, Y, P)$ is
  obtained from $(X_A, Y_A, P_A)$ by base change along a map $A\rightarrow K$.
  The scheme $I'$ then also extends to a finite type $A$-scheme $I'_A$ over $A$.
  Since $I'$ is of finite type over $A$ to prove that $I'$ is nonempty it
  suffices to show that $I'_A$ has nonempty fiber over $\mathbb{F}_p$ for
  infinitely many primes $p$.  This holds by Theorem \ref{T:1.2}.
\end{proof}


\section{Bypassing Hodge theory}\label{S:section10}

\begin{pg}
  The appeal to analytic techniques implicit in the results of Section
  \ref{S:section4}, where characteristic $0$ results based on Hodge theory are
  used to deduce Theorem \ref{T:2.2v1}, can be bypassed in the following way using
  results of \cite{Maulik} and \cite{LM}.
\end{pg}

\begin{pg} Let $R$ be a complete discrete valuation ring of equicharacteristic
  $p>0$ with residue field $k$ and fraction field $K$. Let $X/R$ be a smooth K3
  surface with supersingular closed fiber. Let $Y_K$ be a K3 surface over $K$
  and $P_K\in D(X_K\times Y_K)$ a perfect complex defining a Fourier-Mukai
  equivalence $\Phi _{P_K}:D(X_{\overline K})\rightarrow D(Y_{\overline K})$.
\end{pg}

\begin{thm}\label{T:GR} Assume that $X$ admits an ample invertible sheaf $L$
  such that $p>L^2+4$. Then after replacing $R$ by a finite extension there exists
  a smooth projective K3 surface $Y/R$ with generic fiber $Y_K$.
\end{thm}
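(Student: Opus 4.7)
The plan is to deduce potentially good reduction for $Y_K$ from the results of Maulik \cite{Maulik} and Lieblich--Maulik \cite{LM}, which under the numerical hypothesis $p > L^2 + 4$ give a purely positive-characteristic criterion, bypassing the Hodge-theoretic ingredient used in the proof of Theorem \ref{T:2.2v1}.

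First I would arrange, after replacing $R$ by a finite extension, that $\Phi_{P_K}$ is strongly filtered. By \cite[6.2]{LO}, any Fourier--Mukai equivalence between K3 surfaces can be modified into a strongly filtered one by composing with a sequence of shifts, twists by line bundles, and spherical twists along $(-2)$-curves, all of which are operations that leave $Y_K$ itself unchanged. After this reduction, the construction from the proof of Theorem \ref{T:4.5} applied to the ample line bundle $L$ on $X$ produces, as $\det R\text{pr}_{2*}(\text{pr}_1^*L \otimes^{\mathbb{L}} P_K)$ up to dualizing, an ample line bundle $M_K$ on $Y_K$ with $M_K^2 = L^2$. In particular $Y_K$ acquires a polarization of degree $L^2$, and the hypothesis $p > L^2 + 4$ becomes a hypothesis on $Y_K$ itself.

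Next I would exploit the good reduction of $X$ to control the Galois representation on $Y_K$. Fix a prime $\ell \neq p$. Since $X/R$ is smooth and proper, the $\ell$-adic realization $\widetilde H(X_{\bar K}, \Q_\ell)$ of the Mukai motive is an unramified representation of the inertia subgroup of $\text{Gal}(\bar K/K)$. The equivalence $\Phi_{P_K}$ induces a Galois-equivariant isomorphism $\widetilde H(X_{\bar K}, \Q_\ell) \simeq \widetilde H(Y_{\bar K}, \Q_\ell)$, and since $H^2(Y_{\bar K}, \Q_\ell)$ sits as a direct summand of $\widetilde H(Y_{\bar K}, \Q_\ell)$, it too is unramified.

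Finally I would combine these two inputs with the main theorems of \cite{Maulik} and \cite{LM}. Under the explicit bound $p > M_K^2 + 4 = L^2 + 4$, Maulik's results provide potentially semistable reduction for the polarized K3 surface $(Y_K, M_K)$ by arguments internal to positive characteristic. With potentially semistable reduction in hand and with $H^2(Y_{\bar K}, \Q_\ell)$ unramified, the criterion of Lieblich--Maulik (cited in the excerpt as \cite[Theorem on bottom of p.~2]{LM}) rules out Kulikov Type II and Type III degenerations, forcing the semistable model to be smooth. Thus after a further finite extension of $R$ we obtain a smooth projective K3 surface $Y/R$ with generic fiber $Y_K$, as required. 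The main technical obstacle is packaging the Maulik--Lieblich--Maulik machinery so that the numerical hypothesis transfers cleanly from $X$ to $Y_K$; this is exactly what the strongly filtered reduction in the first step accomplishes, since it produces an ample class on $Y_K$ of the same degree as $L$ rather than one whose degree depends uncontrollably on the Fourier--Mukai kernel.
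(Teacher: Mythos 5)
Your route for producing a smooth model is essentially the paper's: transfer the degree bound from $X$ to $Y_K$ via the strongly filtered kernel (giving an ample $M_K$ with $M_K^2=L^2$), note that $H^2(Y_{\overline K},\Q_\ell)$ is unramified because it is a summand of the unramified Mukai realization of $X$, and invoke Maulik's Kulikov-model construction together with the Lieblich--Matsumoto criterion \cite[2.2]{LM}. That part is fine.

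The genuine gap is projectivity. What \cite[2.2]{LM} produces under the hypothesis $p>L^2+4$ is a smooth K3 model $Y/R$ only as an \emph{algebraic space}: the smooth member of the Kulikov/flop equivalence class need not carry an extension of $M_K$ as an ample, or even effective, line bundle, since the birational modifications used to reach it can move the class $M_K$ out of the ample cone of the closed fiber. Your final sentence simply asserts that the smooth semistable model is the desired smooth \emph{projective} surface, which is exactly the point that does not follow. The paper closes this gap with an extra step: after further composing $P_K$ with spherical twists along $(-2)$-curves and replacing $Y$ by a sequence of flops, one arranges that $\Phi_{P_K}$ preserves ample cones on the crystalline cohomology of the closed fibers; then one extends the kernel to $P\in D(X\times_R Y)$ and checks that $M=\det(\Phi_P(L))$ is a line bundle on $Y$ whose restriction to the closed fiber is ample, whence $M$ is ample and $Y$ is a projective scheme. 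Some argument of this kind (choosing the model within its flop class so that the transported polarization stays ample in the special fiber) is needed to complete your proof.
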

\begin{proof}
  Changing our choice of Fourier-Mukai equivalence $P_K$, we may assume that
  $P_K$ is strongly filtered. Setting $M_K$ equal to $\text{det}(\Phi _{P_K}(L))$
  or its dual, depending on whether $\Phi _{P_K}$ preserves ample cones, we get an
  ample invertible sheaf on $Y_K$ of degree $L^2$. By \cite[2.2]{LM}, building on
  Maulik's work \cite[Discussion preceding 4.9]{Maulik} we get a smooth K3 surface
  $Y/R$ with $Y$ an algebraic space. Now after replacing $P_K$ by the composition
  with twists along $(-2)$-curves and the model $Y$ by a sequence of flops, we can
  arrange that the map on crystalline cohomology of the closed fibers induced by
  $\Phi _{P_K}$ preserves ample cones. Let $P\in D(X\times _RY)$ be an extension
  of $P_K$ and let $M$ denote $\text{det}(\Phi _P(L))$. Then $M$ is a line bundle
  on $Y$ whose reduction is ample on the closed fiber. It follows that $M$ is also
  ample on $Y$ so $Y$ is a projective scheme.
\end{proof}

\begin{pg} We use this to prove Theorem \ref{T:1.2} in the case of \'etale realization
  in the following way. First observe that using the same argument as in Section
  \ref{S:section8}, but now replacing the appeal to Theorem \ref{T:2.2v1} by the above
  Theorem \ref{T:GR}, we get Theorem \ref{T:1.2} under the additional assumption that $X$
  admits an ample invertible sheaf $L$ with $p>L^2+4$. By the argument of
  Section \ref{S:section9} this suffices to get Theorem \ref{T:1.2} in characteristic $0$,
  and by the specialization argument of Section \ref{S:section7} we then get also the
  result in arbitrary characteristic.
\end{pg}

\providecommand{\bysame}{\leavevmode\hbox to3em{\hrulefill}\thinspace}

\end{document}